\numberwithin{equation}{section}
\newtheorem{theorem}{Theorem}
\newtheorem{lemma}{Lemma}
\begin{document}
 \title{Degree complexity of a family of birational maps: II. Exceptional cases}
\author{Tuyen Trung Truong}
  \address{Department of mathematics, Indiana University Bloomington, IN 47405}
 \email{truongt@indiana.edu}

\thanks{The author would like to thank Professor Bedford for his helpful suggestions.}
    \date{\today}
    \keywords{Birational maps, Degree complexity}
    \subjclass[2000]{37F10.}
    \begin{abstract}
We compute the degree complexity of the family of birational maps considered in \cite{bedford-kim-tuyen-abarenkova-maillard} for all exceptional cases.
Some interesting properties of the family are also given.
\end{abstract}
\maketitle
\section{Introduction}
We continue the work of \cite{bedford-kim-tuyen-abarenkova-maillard} where we considered a family $k_F$ of birational maps of the plane determined by a
choice of polynomial $F(w) = a_0 + a_1w +\ldots + a_nw^n$ (the definition of the family $k_F$ will be recalled in Section 2). In \cite{bedford-kim-tuyen-abarenkova-maillard} we determined the degree complexity $\delta
(k_F)$ in the generic case. If $\delta (k_{\widehat{F}})$ is less than the generic value, then we say that $\widehat{F}$ is exceptional. (The set of
exceptional parameters is a nowhere dense algebraic subset.) This corresponds to cases of degree reduction which are especially interesting because they
correspond to the maps that have special symmetries.

As seen in \cite{bedford-kim-tuyen-abarenkova-maillard}, there is a fundamental difference between the cases where n, the degree of F, is even or odd.

The complexity degree $\delta (k_F)$ in the case $n$ is even is given by
\begin{theorem}
Suppose that $n=deg(F)$ is even.

Case 1: If $a_0\not= \frac{2}{1+m}$ for all integers $m\geq 0$ then $\delta (k_F)$ is the largest root of $x^2-(n+1)x-1$.

Case 2: If $a_0=\frac{2}{1+m}$ for some integer $m\geq 0$ then $\delta (k_F)$ is the largest real root of the polynomial $x^{2m+1}(x^2-(n+1)x-1)+x^2+n.$
\label{DegreeEvenCase}\end{theorem}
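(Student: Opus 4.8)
The plan is to compute $\delta(k_F)$ as the spectral radius of the pullback action $k_F^*$ on the Picard group of a suitable rational surface on which $k_F$ becomes algebraically stable, following the Diller--Favre/Bedford--Kim method recalled in \cite{bedford-kim-tuyen-abarenkova-maillard}. First I would make explicit, using the coordinates of Section 2, the exceptional curves contracted by $k_F$, the indeterminacy points of $k_F$ and $k_F^{-1}$, and the points to which the exceptional curves collapse. The whole computation then rests on tracking the forward orbit of these collapsed points and deciding when such an orbit first meets an indeterminacy point of $k_F$: algebraic stability fails precisely when some exceptional curve is eventually sent to a point of indeterminacy, and it is this failure that forces a degree drop below the generic value.

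The heart of the argument is an orbit analysis showing that the coincidence $a_0=\frac{2}{1+m}$ is exactly the arithmetic condition under which the orbit of a collapsed exceptional curve returns to the indeterminacy locus after $2m+1$ steps. I expect this to follow by writing the relevant orbit point as a rational function of $a_0$ (and the other coefficients of $F$) and solving the landing equation; the even parity of $n$ enters here, as in paper I, to select the relevant branch of the orbit. In Case~1 no such coincidence occurs, so after the standard regularization the map is already algebraically stable on the space used in \cite{bedford-kim-tuyen-abarenkova-maillard}, the induced action on cohomology agrees with the generic even case, and hence $\delta(k_F)$ is the largest root of $x^2-(n+1)x-1$.

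In Case~2 I would blow up the finite orbit of length $2m+1$ produced by the coincidence, obtaining a rational surface $X$ on which $k_F$ lifts to an algebraically stable map. On $X$ one computes $k_F^*$ on $\mathrm{Pic}(X)\otimes\mathbb{R}$ in the basis consisting of the line class $H$ and the exceptional divisors: I expect the generic part of the action to contribute the factor $x^2-(n+1)x-1$, the orbit of blow-ups to contribute the shift $x^{2m+1}$, and the way the orbit closes onto the exceptional/indeterminacy data to contribute the correction $x^2+n$. Assembling these gives the characteristic polynomial $x^{2m+1}(x^2-(n+1)x-1)+x^2+n$. Finally, since $k_F$ is algebraically stable and $k_F^*$ preserves the nef cone, a Perron--Frobenius argument identifies the spectral radius with the largest real root of this polynomial, which is therefore $\delta(k_F)$.

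The main obstacle I anticipate is the bookkeeping in the last two steps: pinning down the exact orbit length $2m+1$ as a function of $m$, and then computing the characteristic polynomial of the resulting block matrix on $\mathrm{Pic}(X)$ without error. Verifying that the dominant eigenvalue is a simple real root (so that $\delta$ is well defined and equals it) will also require checking that the candidate root strictly exceeds the moduli of all other roots; I would handle this by rewriting the equation as $x^{2m+1}(x^2-(n+1)x-1)=-(x^2+n)$ and performing a sign and degree analysis, noting in particular that at the root $r$ of $x^2-(n+1)x-1$ the left side vanishes while the right side is negative, which both locates the Case~2 root just below the generic value $r$ and confirms the expected inequality $\delta(k_F)<r$ in the exceptional case.
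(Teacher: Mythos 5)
Your proposal matches the paper's proof in all essentials: both regularize $k_F$ on the space $X$ of the earlier paper, identify $a_0=\frac{2}{1+m}$ as precisely the condition under which the collapsed image of $C_4$ lands on an indeterminacy point after $2m+1$ steps, blow up that finite orbit to obtain an algebraically stable lift $k_Z$, and obtain $\delta(k_F)$ as the largest real root of the characteristic polynomial $x^{2m+1}(x^2-(n+1)x-1)+x^2+n$ of $k_Z^*$ on $Pic(Z)$ (with Case 1 reducing to the generic computation). The only step the paper makes explicit that you assert without argument is that the lifted map is in fact algebraically stable after these blow-ups --- its Lemma 1 checks in local coordinates that $C_4$ and the new fibers $R_0,Q_1,\ldots,Q_m,R_m$ are no longer contracted --- but this is a detail within the same approach rather than a divergence.
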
 The complexity degree $\delta (k_F)$ in the case $n$ is odd is given by
\begin{theorem}
Suppose that $n=deg(F)\geq 3$ is odd. Let linear functions $L_j:\mathbb C^{n+1}\rightarrow \mathbb C$ (for $0\leq j\leq n$) be defined as in
(\ref{LinearFunction})
\begin{eqnarray*}
L_j(a_0,a_1,\ldots ,a_n)=-(a_{n-j}+a_{n-j+1})-[-a_n\left (\stackrel{n}{j}\right )+a_{n-1}\left (\stackrel{n-1}{j-1}\right )+\ldots
+(-1)^{j+1}a_{n-j}\left (\stackrel{n-j}{0}\right )],
\end{eqnarray*}
where
\begin{eqnarray*}
\left (\stackrel{n}{j}\right )=\frac{n!}{j!(n-j)!}
\end{eqnarray*}
with $n!=n(n-1)\ldots 2.1$ the factorial of $n$.

Let $0\leq h\leq n-2$ be the largest integer in $[0,n-2]$ for which
\begin{eqnarray*}
L_j(a_0,a_1,\ldots ,a_n)=0
\end{eqnarray*}
for all $0\leq j\leq h$.

Case 1: $h<n-2$, and $a_0\not= 2/(1+m)$ for all integers $m\geq 0$. Then $\delta (k_F)$ is the largest real root of the polynomial $x^3-nx^2-(n+1-h)x-1.$

Case 2: $h<n-2$, and $a_0=2/(1+m)$ for some integer $m\geq 0$. Then $\delta (k_F)$ is the largest real root of the polynomial $x^{2m+1}(x^3-nx^2-(n-h+1)x-1)+x^3+x^2+nx+n-h-1.$

Case 3: $h=n-2$, and $a_0\not= 2/(1+m)$ for all integers $m\geq 0$, and $a_0\not= \frac{n+1}{2}+\frac{l}{2(1+l)}$ for all integers $l\geq 0$. Then
$\delta (k_F)$ is the largest real root of the polynomial $x^3-nx^2-2x-1.$

Case 4: $h=n-2$, and $a_0= 2/(1+m)$ for some integer $m\geq 0$, and $a_0\not= \frac{n+1}{2}+\frac{l}{2(1+l)}$ for all integers $l\geq 0$. Then $\delta
(k_F)$ is the largest real root of the polynomial $x^{2m}(x^3-nx^2-2x-1)+x^2+x+n.$

Case 5:  $h=n-2$, and $a_0\not= 2/(1+m)$ for all integers $m\geq 0$, and $a_0= \frac{n+1}{2}+\frac{l}{2(1+l)}$ for some integer $l\geq 0$. Then $\delta
(k_F)$ is the largest real root of the polynomial $x^{2l+2}(x^3-nx^2-2x-1)+nx^2+x+1.$

Case 6: $h=n-2$, and $a_0= 2/(1+m)$ for some integer $m\geq 0$, and $a_0= \frac{n+1}{2}+\frac{l}{2(1+l)}$ for some integer $l\geq 0$. Then $n=3$,
$a_0=2$, and the map $k_F$ is exactly the family considered in Section 5 in \cite{bedford-kim-tuyen-abarenkova-maillard}. Hence in this case $\delta
(k_F)=1$. \label{DegreeOddCase}\end{theorem}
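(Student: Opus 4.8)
The plan is to follow the now-standard strategy for computing the dynamical degree of a birational surface map: produce a smooth projective rational surface $X$, obtained from the compactification of $k_F$ used in \cite{bedford-kim-tuyen-abarenkova-maillard} by a finite sequence of blow-ups, on which $k_F$ lifts to an \emph{algebraically stable} map $\widetilde{k}_F$ (no exceptional curve of any iterate is sent to a point of indeterminacy). Once algebraic stability is achieved, $\delta(k_F)$ equals the spectral radius of the pullback $\widetilde{k}_F^{*}$ acting on $H^{1,1}(X)$, and since this action preserves the nef cone a Perron--Frobenius argument identifies that spectral radius with the largest real root of the characteristic polynomial of $\widetilde{k}_F^{*}$. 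The theorem therefore reduces to two tasks in each case: deciding which points must be blown up, and computing the resulting characteristic polynomial.

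First I would recall from \cite{bedford-kim-tuyen-abarenkova-maillard} the exceptional curves of $k_F$, the indeterminacy points of $k_F$ and $k_F^{-1}$, and the points to which the exceptional curves are collapsed. Algebraic stability fails exactly when the forward $k_F$-orbit of a collapsed point eventually meets an indeterminacy point, and the cure is to blow up the entire orbit segment joining the collapsed point to that indeterminacy point. So for each exceptional curve I would compute the forward orbit of its image point and determine the first time, if any, it lands on an indeterminacy point. When such an orbit is open (never returns), only a bounded number of blow-ups is required; when it closes, a long exceptional chain appears and lengthens the orbit data.

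The main computation is to show that these orbit lengths are governed precisely by the arithmetic conditions on $a_0$ and by the integer $h$. The linear forms $L_j$ are designed so that $L_j=0$ expresses the vanishing of the relevant coordinate of the $j$-th point of a distinguished orbit segment; the largest $h$ with $L_0=\cdots=L_h=0$ thus measures how long an exceptional image remains on a distinguished line before leaving it, which is what produces the middle coefficient $n+1-h$, saturating to the value $2$ once $h=n-2$. Separately, iterating the first coordinate along the orbit produces a rational recursion whose return to an indeterminacy point after $2m+1$ steps (one fewer when $h=n-2$) is equivalent to $a_0=2/(1+m)$, and whose return along the second distinguished orbit after $2l+2$ steps is equivalent to $a_0=\frac{n+1}{2}+\frac{l}{2(1+l)}$. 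I expect this explicit orbit analysis --- isolating exactly the two closing conditions and verifying the stated step counts --- to be the principal obstacle; it is a delicate induction on the iterates using the binomial coefficients appearing in the $L_j$.

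With the orbit data in hand, the rest is bookkeeping. Taking as a basis the class of a generic line together with the exceptional divisor classes, $\widetilde{k}_F^{*}$ acts by a shift on the divisors lying over each finite orbit segment and by an explicit formula on the line class. In the open cases (Cases 1 and 3) this yields directly the cubic $x^3-nx^2-(n+1-h)x-1$, specializing at $h=n-2$ to $x^3-nx^2-2x-1$. When one orbit closes (Cases 2, 4, 5) the extra exceptional chain of length $2m+1$, $2m$, or $2l+2$ multiplies the characteristic polynomial by the corresponding power of $x$ and contributes the explicit lower-order correction term listed, producing the stated expressions. Finally, in Case 6 both closing conditions hold at once; I would solve the resulting system in $a_0$ and $n$ to force $n=3$ and $a_0=2$, and then identify $k_F$ with the family treated in Section 5 of \cite{bedford-kim-tuyen-abarenkova-maillard}, for which $\delta(k_F)=1$ is already known, completing the proof.
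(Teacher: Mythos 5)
Your overall strategy coincides with the paper's: regularize $k_F$ by blowing up the orbit segments joining collapsed points to indeterminacy points, invoke algebraic stability to identify $\delta(k_F)$ with the spectral radius of the pullback on $Pic$, and extract that radius from an explicit characteristic polynomial. However, there is a genuine gap in your treatment of the cases $h=n-2$ (Cases 3--6), and it shows up as an arithmetic inconsistency in your own sketch: you assert that the cubic $x^3-nx^2-(n+1-h)x-1$ ``specializes at $h=n-2$ to $x^3-nx^2-2x-1$'', but substituting $h=n-2$ gives middle coefficient $n+1-(n-2)=3$, not $2$. Case 3 is \emph{not} the boundary specialization of Case 1, and no bookkeeping of the kind you describe will turn the $3$ into a $2$. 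The missing idea is what the paper calls ``double point-blowups'': for $n$ odd, the vanishing $L_0=\cdots=L_{n-2}=0$ automatically forces $L_{n-1}=0$ as well (Lemma \ref{problem1}, whose proof is itself nontrivial, resting on the generating-function identity $\theta(t)=1/(1+\cos t)$ for a universal triangular linear system; the companion fact that $h$ must be even whenever $h<n-2$ is part of the same phenomenon). Consequently, when $h=n-2$ the chain of blowups on the fibers $P_{n-1+j}$ extends one step beyond what your induction predicts --- effectively $h$ becomes $n-1$, whence the coefficient $n+1-(n-1)=2$ --- and the exceptional orbit then lands on the indeterminacy point $e_2=[0:0:1]$, which must also be blown up. The A.S. model thus acquires two additional exceptional classes whose pullback action (which swaps $P_{n-1+n}$ with $E_2$) changes the invariant subspace and the resulting polynomial.

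The same missing mechanism underlies the features of Cases 4, 5, 6 that your sketch can only assert rather than derive: the drop from $x^{2m+1}$ (Case 2) to $x^{2m}$ (Case 4), and the very existence of the second closing condition $a_0=\frac{n+1}{2}+\frac{l}{2(1+l)}$, which arises only when $h=n-2$ because only then does the orbit of $C_2$ enter $E_2$. Moreover, to show that this orbit closes exactly under that condition, the paper needs the identity $\sum_{j=2}^n(-1)^ja_j=0$ (Lemma \ref{problem2}, again a consequence of the vanishing of $L_0,\dots,L_{n-1}$), which yields $d_n-\gamma_n=2a_0$ and hence that $k^2$ acts on $E_2$ as a translation by $2a_0-(n+2)$; landing on $0\in E_2$ after $l+1$ double steps is then equivalent to the stated condition on $a_0$, and also pins down Case 6 ($a_0=2$, $n=3$). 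You correctly flag the orbit analysis as the principal obstacle, but without the automatic vanishing of $L_{n-1}$ and the identity of Lemma \ref{problem2}, the induction you propose stalls precisely at the transition $h=n-2$, which is where four of the six cases live.
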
 There are two interesting phenomena which occur to the maps $k_F$:

1. The first phenomenon, which occurs when $n\geq 3$ is odd, is what we call "double point-blowups". This means that in Theorem \ref{DegreeOddCase}, if
$h<n-2$ then $h$ is an even number, while if $h=n-2$ then $L_{n-1}(a_0,\ldots ,a_n)=0$. We will give an example exploring the case $n=3$ in Section \ref{sectionExamplen=3} to
illustrate this phenomenon. This is a consequence of the results about a system of linear equations that we will explore in Section \ref{sectionSystemLinearEquations}.

2. The other phenomenon is that there is no automorphism if $n=deg(F)$ is different from $1$ or $3$. This is also a sequence of the results about a system
of linear equations that we mentioned above. The exact formulation of this phenomenon is
\begin{theorem}
Let $n=deg(F)$. If $n\not= 1$ and $n\not= 3$ then there is no space $Z$ which satisfies the following two conditions:

1) $Z$ is constructed from $\mathbb P^2$ by a finite number of point blowing-ups.

2) The induced map $k_Z:Z\rightarrow Z$ is an automorphism.

If $n=3$ then a space $Z$ with properties 1) and 2) exists iff $F(z)=a_3z^3+a_3z^2+a_1z+2$.
\label{NoAutomorphism}\end{theorem}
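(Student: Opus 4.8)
The plan is to use the standard obstruction theory for regularizing a birational map to an automorphism and then read off the required information from the explicit dynamical degrees in Theorems \ref{DegreeEvenCase} and \ref{DegreeOddCase}. Recall that $k_F$ becomes an automorphism of a surface $Z$ obtained from $\mathbb{P}^2$ by finitely many point blow-ups precisely when the forward orbit of each exceptional curve is finite and terminates at a point of indeterminacy; in that case $k_Z^*$ acts on $H^2(Z;\mathbb{Z})$ as an integral isometry of the intersection form (of signature $(1,\rho-1)$) fixing the canonical class $K_Z$. By the now-standard results on rational surface automorphisms (Diller--Favre, McMullen, and the technique of \cite{bedford-kim-tuyen-abarenkova-maillard}), the characteristic polynomial of $k_Z^*$ is then a monic reciprocal integer polynomial, and the dynamical degree $\delta(k_F)$, which equals its spectral radius, is either $1$ (zero entropy) or a Salem number. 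Since a Salem number has minimal polynomial reciprocal of even degree at least $4$, the whole problem reduces to two checks: that $\delta(k_F)>1$, and that $\delta(k_F)$ is not a Salem number, for every $F$ with $n\neq 1,3$.

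The low-degree cases fall out immediately. A direct examination of the polynomials in Theorems \ref{DegreeEvenCase} and \ref{DegreeOddCase} shows that the largest real root $\delta(k_F)$ exceeds $1$ in every case except odd Case 6, which forces $n=3$ and is excluded here; for instance $x^2-(n+1)x-1$ and the cubics $x^3-nx^2-(n+1-h)x-1$ and $x^3-nx^2-2x-1$ are all negative at $x=1$ and positive for large $x$. Moreover, in the even Case 1 and the odd Cases 1 and 3, $\delta(k_F)$ is a root of such a quadratic or cubic, hence has degree at most $3$ over $\mathbb{Q}$ and cannot be a Salem number; equivalently $x^2-(n+1)x-1$ is visibly non-reciprocal and the conjugate of $\delta(k_F)$ is $-1/\delta(k_F)$ rather than $1/\delta(k_F)$, exhibiting $\delta(k_F)$ as a Pisot, not a Salem, unit. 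This already settles every even $n$ in Case 1 and every odd $n\ge 5$ in Cases 1 and 3.

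The substantive work is the remaining higher-degree families: even Case 2 and odd Cases 2, 4 and 5, where the exceptional polynomial carries the factors $x^{2m+1}$, $x^{2m}$ or $x^{2l+2}$ and $\delta(k_F)$ may have degree at least $4$. Here I would prove directly that the minimal polynomial of $\delta(k_F)$ is not reciprocal. The idea is to split off the cyclotomic factors --- each of these polynomials is divisible by $(x-1)$, as one checks by substitution --- and then to show that the remaining Perron factor carrying the dominant root is non-reciprocal, inheriting this from the non-reciprocal ``core'' $x^2-(n+1)x-1$ or $x^3-nx^2-\cdots-1$ whose reciprocal counterpart of $\delta(k_F)$ is a negative number. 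The clean way to organize this is through the structural results on the system of linear equations of Section \ref{sectionSystemLinearEquations}, together with the ``double point-blowups'' phenomenon: these pin down exactly which $L_j$ vanish, hence the orbit lengths, hence the precise shape and factorization of the characteristic data, and they are what ultimately prevent the reciprocity needed for a Salem number. This non-reciprocity verification is the main obstacle, since it is the one step that does not reduce to a mere degree count.

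Finally, for $n=3$ I would argue both directions. If $F$ is not of the form $a_3z^3+a_3z^2+a_1z+2$, then the parameters fall into odd Cases 1--5 with $n=3$, so $\delta(k_F)>1$ and is not Salem by the preceding paragraphs, and no such $Z$ exists. If $F=a_3z^3+a_3z^2+a_1z+2$, then $a_0=2$ and the vanishing $L_0=0$ place us in odd Case 6, where $\delta(k_F)=1$; here zero entropy is only necessary, so I would finish with the positive construction, blowing up the now-finite orbits of the exceptional curves and verifying that the orbit data closes up, which identifies $k_F$ with the family regularized in Section 5 of \cite{bedford-kim-tuyen-abarenkova-maillard} and exhibits the automorphism $k_Z$ explicitly. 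Matching the Case 6 conditions ($a_0=2/(1+m)$ and $a_0=(n+1)/2+l/(2(1+l))$ holding simultaneously, forcing $n=3$ and $a_0=2$) to the coefficient normal form $a_2=a_3$ is the remaining bookkeeping that singles out the exact family in the statement.
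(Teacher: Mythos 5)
Your strategy (automorphism $\Rightarrow$ $k_Z^*$ preserves the intersection form $\Rightarrow$ its characteristic polynomial is reciprocal $\Rightarrow$ $\delta(k_F)$ is $1$ or a Salem number, then rule out Salem numbers using the explicit polynomials of Theorems \ref{DegreeEvenCase} and \ref{DegreeOddCase}) is legitimate in outline and genuinely different from the paper's, but it has a real gap: the cases you yourself flag as ``the substantive work'' --- even Case 2 and odd Cases 2, 4 and 5 --- are exactly the ones you never prove. For these infinite families (indexed by $n,m,h,l$) you only assert that, after splitting off cyclotomic factors, the factor carrying $\delta$ ``inherits'' non-reciprocity from the core $x^2-(n+1)x-1$; such inheritance under the perturbations $x^{2m+1}(\cdot)+x^2+n$, etc., is not a theorem, and nothing in Section \ref{sectionSystemLinearEquations} supplies it. Note also that you cannot shortcut this by invoking the conclusion that no automorphism exists: Salemness is only a \emph{necessary} condition for an automorphism, so non-Salemness must be established independently for every parameter value, or your method simply cannot conclude. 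The step is fillable, but it takes an actual computation in each family; for instance, in even Case 2, if $\delta$ were Salem its (palindromic) minimal polynomial would divide both $\chi(x)=x^{2m+1}(x^2-(n+1)x-1)+x^2+n$ and its reversal $\tilde\chi(x)=x^{2m+3}\chi(1/x)$, hence $\delta$ would be a root of $n\chi-\tilde\chi=-(n+1)\bigl(nx^{2m+2}+x^{2m+1}-x^2-x-(n-1)\bigr)$, which is impossible since $n\delta^{2m+2}+\delta^{2m+1}>\delta^2+\delta+(n-1)$ for $\delta>1$, $n\ge 2$. Analogous verifications for odd Cases 2, 4, 5 are missing, and your $n=3$ discussion quietly relies on those same unproven cases (Cases 2, 4, 5 with $n=3$), so it is also incomplete as written.

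For comparison, the paper's proof never touches the characteristic polynomials or any number theory of their roots. It reads the obstruction directly off the orbit analysis behind Theorems \ref{DegreeEvenCase} and \ref{OddCase}: for even $n$, the exceptional curves $C_1,C_2,P_1,\dots,P_{n-2}$ all map to the point $1/a_n\in P_{n-1}$, which is fixed and is not the indeterminacy point $-1/a_n$, so no finite sequence of point blowups can ever close up these orbits; for odd $n\ge 5$, closing up all orbits would require simultaneously $a_0=2/(m+1)\le 2$ and $a_0=\frac{n+1}{2}+\frac{l}{2(l+1)}\ge 3$, a contradiction; and $n=3$ is exactly Lemma \ref{Automorphism}. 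That argument is complete and much lighter than yours; what your route would buy, if you carried out the non-Salem verification in all four exceptional families, is a self-contained cohomological proof that uses only the statements (not the proofs) of the degree-complexity theorems, at the price of nontrivial arithmetic that the geometric argument avoids entirely.
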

\begin{proof}
We consider three cases:

Case 1: $n=deg(F)$ is even. Then from the proof of Theorem \ref{DegreeEvenCase}, it follows that we can not resolve the point $\frac{1}{a_n}\in P_{n-1}$, which
is the image of some exceptional curves, to obtain an automorphism.

Case 2: $n=deg(F)\geq 5$ is odd. Then it follows from the proof of Theorem \ref{OddCase}, we can have an automorphism iff simultaneously
$a_0=\frac{2}{m+1}$ for some $m=0,1,2,\ldots $, and $a_0=\frac{n+1}{2}+\frac{l}{2(l+1)}$ for some $l=0,1,2,\ldots $. If $a_0=\frac{2}{m+1}$ then
obviously $a_0\leq 2$, while if $a_0=\frac{n+1}{2}+\frac{l}{2(l+1)}$ and $n\geq 5$ then $a_0\geq 3$. Hence in this case we do not have an automorphism.

Case 3: $n=deg(F)=3$. Then use Lemma \ref{Automorphism} we have that a space $Z$ with properties 1) and 2) in the statement of Theorem
\ref{NoAutomorphism} exists iff $F(z)=a_3z^3+a_3z^2+a_1z+2$.
\end{proof}

Theorem \ref{NoAutomorphism} shows that the family $k_F$ corresponding with the maps $F(z)=az^3+az^2+b+2$ (with $a\not= 0$) as described in Section 5 in
\cite{bedford-kim-tuyen-abarenkova-maillard} is the only family of automorphism in the whole family $k_F$, besides the case $n=deg(F)=1$ which was known
previously (see \cite{diller-favre} and \cite{bedford-diller}).
\section{Properties of $k_F$}
We review in this Section some results in \cite{bedford-kim-tuyen-abarenkova-maillard}.

Let $\mathbb P^2$ be the complex projective space of dimension $2$, with coordinate $[x_0:x_1:x_2]$. Given a polynomial of degree $n$
\begin{eqnarray*}
F(z)=a_nz^n+a_{n-1}z^{n-1}+\ldots +a_1z+a_0
\end{eqnarray*}
where $a_n\not= 0$, we define a birational map $k=k_F:\mathbb P^2\rightarrow \mathbb P^2$ (see \cite{bedford-kim-tuyen-abarenkova-maillard}) by the
formula $k_F=j_F\circ i$ where $j_F$ and $i$ are involutions defined in the open Zariski dense set $\mathbb C^2$ of $\mathbb P^2$ by
\begin{eqnarray*}
j_F(x,y)=(-x+F(y),y),~i(x,y)=(1-x-\frac{x-1}{y},-y-1-\frac{y}{x-1}).
\end{eqnarray*}
The map $k_F=[k_0:k_1:k_2]$ is given in homogeneous coordinates as
\begin{eqnarray*}
k_0&=&(x_0x_1-x_0^2)^nx_2,\\
k_1&=&x_0^{n-1}(x_0-x_1)^{n+1}(x_0+x_2)+x_2\sum _{j=0}^na_j(x_0x_1-x_0^2)^{n-j}(x_2^2-x_0x_1-x_1x_2)^j,\\
k_2&=&x_2(x_0x_1-x_0^2)^{n-1}(x_2^2-x_0x_1-x_1x_2).
\end{eqnarray*}
It is worth to write out also the non-homogeneous form of $k$ which is convenient in computation
\begin{equation}
k[x_0:x_1:x_2]=[1:\frac{(x_1-x_0)(x_2+x_0)}{x_0x_2}+F(-1-\frac{x_1x_2}{x_0(x_1-x_0)}):-1-\frac{x_1x_2}{x_0(x_1-x_0)}]\label{TheMap}
\end{equation}
The inverse map is
\begin{equation}
k^{-1}[x_0:x_1:x_2]=[1:(1+\frac{x_1}{x_0}-F(\frac{x_2}{x_0}))(1+\frac{x_0}{x_2}):-1-\frac{x_2}{x_0}+\frac{x_2}{x_0+x_1-x_0F(x_2/x_0)}].\label{TheInverseMap}
\end{equation}
We recall the following notations from \cite{bedford-kim-tuyen-abarenkova-maillard}:
\begin{eqnarray*}
C_1&=&\{x_0=0\}, ~C_2=\{x_0=x_1\},~C_3=\{x_2=0\},C_4=\{-x_0^2+x_0x_1+x_1x_2=0\},\\
C_1'&=&C_1,~C_2'=\{1+\frac{x_1}{x_0}-F(\frac{x_2}{x_0})=0\},~C_3'=C_3,\\
C_4'&=&\{\frac{x_2}{x_0}-(1+\frac{x_2}{x_0})(1+\frac{x_1}{x_0}-F(\frac{x_2}{x_0}))=0\}.
\end{eqnarray*}
The exceptional hypersurfaces of $k_F$ are mapped as
\begin{eqnarray*}
k_F:~C_4\mapsto [1:-1+a_0:0]\in C_3,~C_1\cup C_2\cup C_3\mapsto e_1.
\end{eqnarray*}
The points of indeterminate of $k_F$ are $e_1=[0:1:0],e_2=[0:0:1],$ and $e_{01}=[1:1:0]$. The exceptional curves for $k_F^{-1}$ are mapped as
\begin{eqnarray*}
k_F^{-1}:~C_1'\cup C_3'\mapsto e_1,~C_2'\mapsto e_2,~C_{4}'\mapsto e_{01}.
\end{eqnarray*}
Notation: Let $Z$ be a complex manifold and let $k_Z:Z\rightarrow Z$ be a birational map. Recall that $k_Z:Z\rightarrow Z$ is (1,1)-regular or
algebraically stable (or A.S. for brevity) if it satisfies
\begin{equation}
(k_Z^p)^*=(k_Z^*)^p\label{ASCondition}
\end{equation}
for all $p\in\mathbb N$, where $k_Z^*:Pic(Z)\rightarrow Pic(Z)$ is the induced pull-back of $k_Z$ on the Picard group of $Z$, and similarly for
$(k_Z^p)^*$. In \cite{diller-favre}, it was proved that any birational map of a compact Kahler surface can be (1,1)-regularized after a finite number of
point-blowups.
\section{Proof of Theorem \ref{DegreeEvenCase}}\label{sectionDegreeEvenCase}
First we recall construction of the space $X$ constructed in Section 3 of \cite{bedford-kim-tuyen-abarenkova-maillard}: Define a complex manifold $\pi
_X:X\rightarrow \mathbb P^2$ (see Figure 3.1 in \cite{bedford-kim-tuyen-abarenkova-maillard}) by blowing ups points $e_1,p_1,\ldots ,p_{n-1}$ in the
following order:

i) blowup $e_1=[0:1:0]$ and let $E_1$ denote the exceptional fiber over $e_1$,

ii) blowup $q=E_1\cap C_4$ and let $Q$ denote the exceptional fiber over $q$,

iii) blowup $p_1=E_1\cap C_1 $ and let $P_1$ denote the exceptional fiber over $e_1$,

iv) blowup $p_j=P_j\cap E_1$ with exceptional fiber $P_j$ for $2\leq j\leq n-1$.

The exceptional curves of the induced map $k_X$ are $C_1,C_2,C_4,P_1,\ldots ,P_{n-2}$. All the curves $C_1,C_2,P_1\ldots ,P_{n-2}$ are mapped to the same
point $1/a_n\in P_{n-1}$, while $C_4$ is mapped to the point $[1:-1+a_0:0]\in C_3$. By Lemmas 3.2 and 3.3 in
\cite{bedford-kim-tuyen-abarenkova-maillard}, the only way that an exceptional curve can be mapped to a point of indeterminacy is if $a_0=2/(m+1)$ for
some $m\in \mathbb N$, and in this case we have $k_X^{2m+1}C_4=[1:1:0]$.

If $a_0=2/(m+1)$ we construct the new manifold $Z$ by blowing up the manifold $X$ at the points
\begin{eqnarray*}
r_0&=&[1:-1+a_0:0]\in C_3,\\
q_1&=&k_X(r_0)\in Q,~r_1=k_X(q_1)\in C_3,\\
\hdots \\
q_m&=&k_X(r_{m-1})\in Q,~r_m=k_X(q_m)=[1:1:0]\in C_3.
\end{eqnarray*}
Call $R_0,Q_1,R_1,\ldots ,Q_m,R_m$ the exceptional fibers of this blowup.
\begin{lemma}
If $a_0=2/(m+1)$ and $Z$ is constructed as above then the curves $C_4,R_0,Q_1,R_1,\ldots ,Q_m,R_m$ are not exceptional for
$k_Y$.\label{ASEvenCase}\end{lemma}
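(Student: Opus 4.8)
The plan is to track the induced map $k_Z$ on the single forward orbit
\[
C_4 \to R_0 \to Q_1 \to R_1 \to \cdots \to Q_m \to R_m \to (\text{a curve in } Z),
\]
and to show that every arrow in this chain has one-dimensional image. If each arrow lands on a curve rather than on a point, then none of $C_4,R_0,Q_1,\ldots,R_m$ is contracted by $k_Z$, which is exactly the assertion. By the discussion preceding the Lemma the blown-up points are precisely the forward images $k_X^{\,j}(C_4)$ for $1\le j\le 2m+1$, the last of which is $r_m=k_X^{\,2m+1}(C_4)=[1:1:0]=e_{01}$; this is why exactly these points, and no others, are chosen.

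First I would dispose of the interior arrows $R_i\to Q_{i+1}$ and $Q_i\to R_i$. For $m\ge 1$ one checks that none of the points $k_X^{\,j}(C_4)$ with $1\le j\le 2m$ is a point of indeterminacy of $k_X$ nor lies on an exceptional curve of $k_X$: for example $r_0=[1:-1+a_0:0]$ meets $C_2$ or $C_4$ only when $a_0=2$ (the case $m=0$), whereas for $m\ge 1$ we have $a_0=2/(m+1)\le 1$, so $r_0$ avoids $C_1,C_2,C_4$ and the fibers $P_1,\ldots,P_{n-2}$, and the same kind of check disposes of the remaining orbit points lying on $Q$ and $C_3$. Hence $k_X$ is a local biholomorphism at each such point, so the map it induces between the exceptional fiber over a point and the fiber over its image is the projectivized differential $\mathbb P(Dk_X)$, an isomorphism $\mathbb P^1\to\mathbb P^1$. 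Thus each of $R_0,Q_1,R_1,\ldots,Q_m$ is carried isomorphically onto the next fiber and is not exceptional; the degenerate case $m=0$, in which $r_0=e_{01}$ already, is covered by the two endpoint computations below.

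The endpoints carry the real content. For the first arrow I would choose coordinates $(u,v)$ on $X$ near a general point of $C_4$ with $C_4=\{v=0\}$, and coordinates $(s,t)$ centered at $r_0$; since $k_X$ contracts $C_4$ to $r_0$ one can factor $k_X(u,v)=(v\,\tilde s(u,v),\,v\,\tilde t(u,v))$, and in the blow-up the induced map sends $u\in C_4$ to $[\tilde s(u,0):\tilde t(u,0)]\in R_0$. The point is to verify that this ratio is \emph{non-constant} in $u$, so that $C_4$ dominates $R_0$; this requires expanding the explicit map (\ref{TheMap}) to first transverse order along $C_4$. For the last arrow I would use that $e_{01}=[1:1:0]$ is a genuine indeterminacy point of $k_F$, so that after blowing it up a local computation with (\ref{TheMap}) near $e_{01}$ exhibits the image of $R_m$ as a curve, whence $R_m$ is not exceptional either.

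The main obstacle is the nondegeneracy of the very first arrow $C_4\to R_0$, namely showing that the transverse leading coefficients $\tilde s(u,0),\tilde t(u,0)$ have non-constant ratio along $C_4$. Once this spreading of $C_4$ across the fiber $R_0$ is secured, the interior fibers are transported isomorphically by $\mathbb P(Dk_X)$, and only the resolution of the single indeterminacy point $e_{01}$ at the far end requires separate, and comparatively routine, checking.
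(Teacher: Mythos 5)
Your overall route is the same as the paper's, not a different one: the paper compresses your reduction into its opening sentence, ``It suffices to check that $C_4$ is not exceptional,'' and your two supporting observations---that the interior blown-up points $q_1,r_1,\ldots,q_m$ are regular points of $k_X$ (neither indeterminate nor on a contracted curve), so the fibers $R_0,Q_1,\ldots,Q_m$ are carried isomorphically onto one another by the projectivized differential, and that $R_m$, lying over the indeterminacy point $e_{01}$, is swept onto a curve (in fact $C_4'$)---are exactly the implicit justification for that sentence. This part of your write-up is actually more explicit than the paper. The genuine gap is what comes next: the non-constancy of the induced map $C_4\to R_0$, which you correctly isolate and then defer as ``the main obstacle \ldots requires expanding the explicit map (\ref{TheMap}) to first transverse order.'' That expansion is not a routine loose end; it is the entire mathematical content of the paper's proof, and your proposal stops exactly where that proof begins. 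As written, you have reduced the lemma to its one hard step without carrying it out.

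For the record, the step does succeed, and quickly. Rewrite $k$ as the paper does,
\[
k[x_0:x_1:x_2]=\Bigl[1:-1-\tfrac{A}{x_0x_1}+F\bigl(\tfrac{A}{x_0(x_1-x_0)}\bigr):\tfrac{A}{x_0(x_1-x_0)}\Bigr],\qquad A:=x_0^2-x_0x_1-x_1x_2,
\]
so that $C_4=\{A=0\}$ and the third coordinate $s:=A/(x_0(x_1-x_0))$ is a transverse parameter vanishing to first order along $C_4$. Expanding $F(s)=a_0+a_1s+O(s^2)$ and substituting $A=s\,x_0(x_1-x_0)$ into the middle coordinate gives $-1+a_0+s\bigl(a_1-1+\tfrac{x_0}{x_1}\bigr)+O(s^2)$; hence in the chart $(s,u)\mapsto[1:-1+a_0+su:s]$ for $R_0$ the induced map is $C_4\ni[x_0:x_1:x_2]\mapsto a_1-1+\frac{x_0}{x_1}\in R_0$, which the paper records (up to the additive constant) as $-1+x_0/x_1$. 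This is visibly non-constant along $C_4$, so $C_4$ dominates $R_0$ and is not exceptional, and combined with your reductions the lemma follows. Until that computation, or an equivalent one, is actually included, the proposal is an outline rather than a proof.
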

\begin{proof}
It suffices to check that $C_4$ is not exceptional. We choose a local projection for $R_0$ as
\begin{eqnarray*}
Z\ni (s,u)\mapsto [1:-1+a_0+su:s].
\end{eqnarray*}
In this coordinate chart $R_0=\{s=0\}$. If we rewrite $k[x_0:x_1:x_2]$  as
\begin{eqnarray*}
k[x_0:x_1:x_2]=[1:-1-\frac{x_0^2-x_0x_1-x_1x_2}{x_0x_1}+F(\frac{x_0^2-x_0x_1-x_1x_2}{x_0(x_1-x_0)}):\frac{x_0^2-x_0x_1-x_1x_2}{x_0(x_1-x_0)}]
\end{eqnarray*}
then it can be seen that
\begin{eqnarray*}
k_Z:C_4\ni [x_0:x_1:x_2]\mapsto -1+\frac{x_0}{x_1}\in R_0.
\end{eqnarray*}
Hence $C_3$ is not exceptional.
\end{proof}
It follows (see \cite{diller-favre}) that $k_Z$ is A.S. Thus we obtain $\delta (k_F)$ as the spectral radius of $k_Z^*$. Now we compute $k_Z^*$.

For brevity, we denote by $E_1$ the strict transform $\widetilde{E_1}$ in $Z$ of the exceptional fiber $E_1$, and the same notation $E_1$ is also
used for the class in $Pic(Z)$ of $\widetilde{E_1}$. The same convenience is applied to $C_1,C_2,C_3,P_1,\ldots ,P_{n-1},Q,Q_1,\ldots ,Q_{m},R_0,\ldots
,R_{m}$. Let $H_Z$ denote the class in $Pic (Z)$ of the strict transform of a generic line $H$ in $\mathbb P^2$. Then $H_Z,E_1,P_1,\ldots
,P_{n-1},Q,Q_1,\ldots ,Q_{m},R_0,\ldots ,R_{m}$ form a basis for the space $Pic(Z)$. $C_1,C_2,C_3,C_4$ can be represented in this basis as
\begin{eqnarray*}
C_1&=&H_Z-E_1-Q-\sum _{j=1}^{n-1}(j+1)P_j-\sum _{j=1}^mQ_j,\\
C_2&=&H_Z-R_m,\\
C_3&=&H_Z-E_1-Q-\sum _{j=1}^{n-1}jP_j-\sum _{j=1}^mQ_j-\sum _{j=0}^mR_j,\\
C_4&=&2H_Z-E_1-2Q-\sum _{j=1}^{n-1}jP_j-2\sum _{j=1}^mQ_j-R_m.
\end{eqnarray*}
The induced map $k_Z$ acts as follows
\begin{eqnarray*}
&&k_Z:E_1\mapsto E_1,~P_{n-1}\mapsto P_{n-1},~C_1,C_2,P_1,\ldots ,P_{n-2}\mapsto \frac{1}{a_n}\in P_{n-1},~Q\mapsto C_3\mapsto Q,\\
&&k_Z:C_4\mapsto R_0\mapsto Q_1\mapsto R_1\mapsto \ldots \mapsto Q_m\mapsto R_m\mapsto C_4',\\
&&k_{Z}^{-1}:C_1,P_1,\ldots ,P_{n-1}\mapsto -\frac{1}{a_n}\in P_{n-1}.
\end{eqnarray*}
From this, the induced map $k_Z^*:Pic(Z)\rightarrow Pic(Z)$ is as follows
\begin{eqnarray*}
k_Z^*(H_Z)&=&(2n+1)H_Z-nE_1-(n+1)Q-(n+1)\sum _{j=1}^{n-1}jP_j-(n+1)\sum _{j=1}^mQ_j-(n+1)R_m,\\
k_Z^*(E_1)&=&E_1,\\
k_Z^*(Q)&=&C_3=H_Z-E_1-Q-\sum _{j=1}^{n-1}jP_j-\sum _{j=1}^mQ_j-\sum _{j=0}^mR_j,\\
k_Z^*(P_j)&=&0,~1\leq j\leq n-2,\\
k_Z^*(P_{n-1})&=&C_1+C_2+\sum _{j=1}^{n-1}P_j=2H_Z-E_1-Q-\sum _{j=1}^{n-1}jP_j-\sum _{j=1}^mQ_j-R_m,\\
k_Z^*(R_0)&=&C_4=2H_Z-E_1-2Q-\sum _{j=1}^{n-1}jP_j-2\sum _{j=1}^mQ_j-R_m,\\
k_Z^*(R_j)&=&Q_j,~1\leq j\leq m,~k_Z^*(Q_j)=R_{j-1},~1\leq j\leq m.
\end{eqnarray*}
From the above we find that the characteristic polynomial of $k_Z^*$ is $$P(x)=-x[x^{2m+1}(x^2-(n+1)x-1)+x^2+n].$$ From this, Theorem \ref{DegreeEvenCase} follows.
\section{Example: case $n=3$}\label{sectionExamplen=3}
In this section we explore the map $k$ when $n=deg(F)=3$. In this case $F(z)=a_3z^3+a_2z^2+a_1z+a_0$ where $a_3\not= 0$.

Let $Y_1=Y$ be the manifold and $E,Q,P_1,P_2,P_3$ the exceptional fibers constructed in Section 4 in \cite{bedford-kim-tuyen-abarenkova-maillard}. The
action of the induced map $k_{Y_1}:Y_1\rightarrow Y_1$ is
\begin{eqnarray*}
C_1,C_2\stackrel{k_{Y_1}}{\mapsto}-\frac{a_2}{a_3^2}\in P_3,~C_1\stackrel{k_{Y_1}^{-1}}{\mapsto}\frac{-2a_3+a_2}{a_3^2}\in P_3,
\end{eqnarray*}
and
\begin{eqnarray*}
k_{Y_1}:P_1\ni u\mapsto -\frac{(1+a_2u)}{a_3^2u}\in P_3,~P_3\ni u\mapsto \frac{1}{-2a_3+a_2-a_3^2u}\in P_1.
\end{eqnarray*}
(In these formula, we use the same local coordinates as that of \cite{bedford-kim-tuyen-abarenkova-maillard}.)

1) Case 1: $a_2\not=a_3$. Then the orbit of exceptional curves $C_1,C_2$ will never land on an indeterminacy point. Hence depending on whether
$a_0=\frac{2}{m+1}$ for some $m=0,1,2,\ldots$ or not, we will decide to perform the blowups as in the proof of Theorem \ref{DegreeEvenCase} or not. For the resulting
manifold $Z$, the induced map $k_Z$ is A.S.

2) Case 2: $a_2=a_3$. Then the map $k_{Y_1}$ is not A.S because
\begin{eqnarray*}
\frac{-2a_3+a_2}{a_3^2}=-\frac{a_2}{a_3^2}=-\frac{1}{a_3},
\end{eqnarray*}
that is the point $-\frac{1}{a_3}\in P_3$ is both an indeterminate point and the image of exceptional curves $C_1,C_2$. We blowup the space $Y_1$ at the
point $-\frac{1}{a_3}\in P_3$. Call $Y_2$ the resulting manifold and $P_4$ the exceptional fiber of this blowup. We choose a coordinate projection for
$P_4$ as
\begin{eqnarray*}
Y_2\ni (s,u)\mapsto [s^3(\frac{1}{a_3}-\frac{s}{a_3}+s^2u):1:s^2(\frac{1}{a_3}-\frac{s}{a_3}+s^2u)].
\end{eqnarray*}
Recall that this means that in this local coordinate $P_4$ is given by the equation $s=0$.

Then the action of the induced map $k_{Y_2}:Y_2\rightarrow Y_2$ is
\begin{eqnarray*}
k_{Y_2}:~P_4\ni u&\mapsto& [0:1:\frac{1}{-a_3+a_1+a_3^2u}]\in C_1,\\
k_{Y_2}:~C_1\ni [0:1:u]&\mapsto&\frac{1+a_3u-a_1u}{a_3u}\in P_4,
\end{eqnarray*}
and
\begin{eqnarray*}
k_{Y_2}:~C_2\mapsto [\frac{a_3-a_1}{a_3^2}]_{P_4}\mapsto [0:0:1]=e_2,
\end{eqnarray*}
Since $e_2$ is an indeterminate point, it follows that $k_{Y_2}$ is not A.S., and we need to blowup more times. This is what we called "double
point-blowups" in Section 1.

We blowup $Y_2$ at points $\frac{a_3-a_1}{a_3^2}\in P_4$ and $e_2$. Call $Y_3$ the resulting manifold and $P_5,E_2$ the exceptional fibers of this
blowup.

We choose a coordinate projection for $P_5$ as
\begin{eqnarray*}
Y_3\ni (s,u)\mapsto [s^3(\frac{1}{a_3}-\frac{s}{a_3}+\frac{a_3-a_1}{a_3^2}s^2+s^3u):1:s^2(\frac{1}{a_3}-\frac{s}{a_3}+\frac{a_3-a_1}{a_3^2}s^2+s^3u)],
\end{eqnarray*}
and a coordinate projection for $E_2$ as
\begin{eqnarray*}
Y_3\ni (s,u)\mapsto [s:su:1].
\end{eqnarray*}
The action of the induced map $k_{Y_3}$ is
\begin{eqnarray*}
k_{Y_3}:~P_5\ni u&\mapsto&-a_3^2u-a_3+2a_1+a_0-4\in E_2,\\
k_{Y_3}:E_2\ni u&\mapsto&\frac{u+a_3-2a_1+a_0-1}{a_3^2}\in P_5,\\
k_{Y_3}^2:E_2\ni u&\mapsto& u+2a_0-5\in E_2,
\end{eqnarray*}
and
\begin{eqnarray*}
k_{Y_3}:~C_2\mapsto -\frac{a_3-2a_1+a_0}{a_3^2}\in P_5\mapsto 2a_0-4\in E_2.
\end{eqnarray*}
This map has only one more indeterminate point at $0\in E_2$.

2.1) Subcase 2.1: $a_0\not= 2+\frac{l}{2(l+1)}$ for any $l=0,1,2,\ldots $. Then the orbit of the exceptional curve $C_2$ will never land on an
indeterminacy point. Hence depending on whether $a_0=\frac{2}{m+1}$ for some $m=0,1,2,\ldots$ or not, we will decide to perform the blowups as in Theorem
\ref{DegreeEvenCase} or not. For the resulting manifold $Z$, the induced map $k_Z$ is A.S.

2.2) Subcase 2.2: $a_0=2+\frac{l}{2(l+1)}$ for some $l=0,1,2,\ldots $. In this case we do a series of blowups at the point $0\in E_2$ and a finite number
of its previous images in the same way as we did in Theorem \ref{DegreeEvenCase}. If also $a_0=\frac{2}{m+1}$ for some $m=0,1,2,\ldots$ we also perform the
series of blowups in Theorem \ref{DegreeEvenCase}. For the resulting space $Z$, the induced map $k_Z:Z\rightarrow Z$ is A.S.

Note that if both $a_0=2+\frac{l}{2(l+1)}$ for some $l=0,1,2,\ldots $ and $a_0=\frac{2}{m+1}$ for some $m=0,1,2,\ldots$ then $a_0=2$. In particular, from
the above analysis, we get that
\begin{lemma}
If $n=3$, then a space $Z$ satisfying 1) and 2) of Theorem \ref{NoAutomorphism} exists iff $F(z)=a_3z^3+a_3z^2+a_1z+2$ where $a_3\not= 0$.
\label{Automorphism}\end{lemma}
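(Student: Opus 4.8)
The plan is to reduce the existence of an automorphism space $Z$ to the simultaneous algebraic stability of the induced map at every location where an exceptional curve can collide with an indeterminacy point, and then read off the resulting constraints on the coefficients of $F$. The starting point is the case analysis already carried out in Section \ref{sectionExamplen=3}. When $a_2\neq a_3$ (Case 1), the orbit of $C_1,C_2$ never meets an indeterminacy point, so after the finitely many blowups of Theorem \ref{DegreeEvenCase} (performed exactly when $a_0=2/(m+1)$) the induced map $k_Z$ is already A.S.; however, to be an automorphism every exceptional curve must be removed, and the orbit of $C_1,C_2$ landing permanently on $P_3$ without ever returning to a blown-up center shows there is a residual exceptional curve, so no automorphism arises here. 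This is why the interesting case is $a_2=a_3$.

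Assuming $a_2=a_3$, I would follow the tower $Y_1\to Y_2\to Y_3$ built in Section \ref{sectionExamplen=3}. The key observation is that $k_{Y_3}$ has a single remaining indeterminacy point $0\in E_2$, and the orbit of the surviving exceptional curve $C_2$ hits $E_2$ at the point $2a_0-4$, after which $k_{Y_3}^2$ acts on $E_2$ by the translation $u\mapsto u+2a_0-5$. Thus, for the $C_2$-orbit to terminate at the indeterminacy point $0\in E_2$ after finitely many iterates, we need $2a_0-4$ to be carried to $0$ by iterates of this translation, i.e. $2a_0-4+l(2a_0-5)=0$ for some integer $l\geq 0$, which rearranges to $a_0=2+\frac{l}{2(l+1)}$. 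This is precisely the dichotomy between Subcases 2.1 and 2.2: only in Subcase 2.2 can one blow up the orbit of $0\in E_2$ and clear the last exceptional curve.

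The final step is to intersect the two necessary conditions for an automorphism. To eliminate every exceptional curve we need both the $C_4$-orbit and the $C_2$-orbit to be absorbable, which forces simultaneously $a_0=\frac{2}{m+1}$ (from the Theorem \ref{DegreeEvenCase} construction handling $C_4$) and $a_0=2+\frac{l}{2(l+1)}$ (from the $C_2$-orbit on $E_2$), together with the standing hypothesis $a_2=a_3$. The main point to verify is that these two equations have a common solution only at $a_0=2$: since $\frac{2}{m+1}\leq 2$ with equality iff $m=0$, and $2+\frac{l}{2(l+1)}\geq 2$ with equality iff $l=0$, the only overlap is $a_0=2$ (with $m=l=0$). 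Combining $a_0=2$ and $a_2=a_3$ yields $F(z)=a_3z^3+a_3z^2+a_1z+2$ with $a_3\neq 0$, and conversely for these parameters all exceptional orbits terminate and $k_Z$ becomes an automorphism. The one step I expect to require genuine care is confirming that after the blowups of Subcase 2.2 no \emph{new} exceptional curve is created and that the previously cleared orbits of $C_1,C_4$ are not disturbed, so that the resulting $k_Z$ is bijective rather than merely A.S.; this is a bookkeeping check on the intersection data analogous to Lemma \ref{ASEvenCase}.
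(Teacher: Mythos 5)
Your proposal is correct and follows essentially the same route as the paper: it reuses the Section \ref{sectionExamplen=3} tower $Y_1\to Y_2\to Y_3$, identifies $a_2=a_3$, $a_0=2/(m+1)$ (absorbing the $C_4$-orbit) and $a_0=2+\frac{l}{2(l+1)}$ (absorbing the $C_2$-orbit on $E_2$) as the conditions for all contracted curves to be resolvable by blowups, and intersects them to force $a_0=2$, exactly as in the paper's concluding remark of that section. Your explicit derivation of $a_0=2+\frac{l}{2(l+1)}$ from the translation $u\mapsto u+2a_0-5$ and the inequality argument forcing $m=l=0$ are just spelled-out versions of what the paper states.
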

\section{A system of linear equations}\label{sectionSystemLinearEquations}
In this section we explore a system of linear equations which is related to the map $k_F$.

Fixed $n\in \mathbb N$, where $n$ is not necessarily odd. We define linear functions $L_j:\mathbb C^{n+1}\rightarrow \mathbb C$ as follows
\begin{equation}
L_j(a_0,a_1,\ldots ,a_n)=-(a_{n-j}+a_{n-j+1})-[-a_n\left (\stackrel{n}{j}\right )+a_{n-1}\left (\stackrel{n-1}{j-1}\right )+\ldots
+(-1)^{j+1}a_{n-j}\left (\stackrel{n-j}{0}\right )],\label{LinearFunction}
\end{equation}
for $0\leq j\leq n$, and where
\begin{eqnarray*}
\left (\stackrel{n}{j}\right )=\frac{n!}{j!(n-j)!}
\end{eqnarray*}
with $n!=n(n-1)\ldots 2.1$ the factorial of $n$. Functions $L_j$ for some first values of $j$ are:
\begin{eqnarray*}
L_0&=&-a_n-[-a_n]=0,\\
L_1&=&-(a_n+a_{n-1})-[-na_n+a_{n-1}]=(n-1)a_n-2a_{n-1},\\
L_2&=&-(a_{n-1}+a_{n-2})-[-a_n\left (\stackrel{n}{2}\right )+a_{n-1}\left (\stackrel{n-1}{1}\right )-a_{n-2}\left (\stackrel{n-2}{0}\right
)]=\frac{n}{2}L_1.
\end{eqnarray*}
We will explore the properties of systems of linear equations of the form
\begin{equation}
L_j(a_0,a_1,\ldots ,a_n)=0\label{LinearSystem0}
\end{equation}
for all $j=0,1,2,\ldots ,m$, where $0\leq m< n$ is a constant integer. It will be convenient to write equations (\ref{LinearSystem0}) as
\begin{equation}
-(a_{n-j}+a_{n-j+1})=-a_n\left (\stackrel{n}{j}\right )+a_{n-1}\left (\stackrel{n-1}{j-1}\right )+\ldots +(-1)^{j+1}a_{n-j}\left (\stackrel{n-j}{0}\right
)\label{LinearSystem1}
\end{equation}
Changing the order of indexes, the equations (\ref{LinearSystem1}) can be written in a more convenient form
\begin{equation}
-(b_j+b_{j-1})=-b_0\left (\stackrel{n}{j}\right )+b_{1}\left (\stackrel{n-1}{j-1}\right )+\ldots +(-1)^{j+1}b_{j}\left (\stackrel{n-j}{0}\right
).\label{LinearSystem}
\end{equation}
The following results will be used to prove the phenomenon "double point-blowups" that we mentioned in Section 1.
\begin{lemma}
If $0\leq m< n$, and $m$ is odd, and if $b_0,b_1,\ldots ,b_n$ satisfy the equations (\ref{LinearSystem}) for all $j=1,3,5,\ldots ,m$ then $b_0,b_1,\ldots
,b_n$ also satisfy (\ref{LinearSystem}) for all $j=0,2,4,\ldots ,m+1$. \label{problem1}\end{lemma}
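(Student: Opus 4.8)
The plan is to package the quantities $L_j$ into a single generating function and exploit a hidden involutive symmetry. First I would pass to the reindexed variables $b_i=a_{n-i}$, so that (\ref{LinearSystem}) becomes $L_j=0$ with
$$L_j=-(b_j+b_{j-1})+\sum_{i=0}^{j}(-1)^i b_i\binom{n-i}{j-i},$$
and introduce the polynomials $B(x)=\sum_{i=0}^n b_i x^i$ and $\widetilde B(x)=\sum_{i=0}^n(-1)^i b_i x^i(1+x)^{n-i}=(1+x)^n B\!\left(\tfrac{-x}{1+x}\right)$. Since $\binom{n-i}{j-i}=[x^j]\,x^i(1+x)^{n-i}$, the generating function $\mathcal L(x)=\sum_{j\ge 0}L_j x^j$ is exactly
$$\mathcal L(x)=\widetilde B(x)-(1+x)B(x).$$

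The key observation I would then make is that the M\"obius involution $\sigma(x)=-x/(1+x)$ satisfies $\sigma\circ\sigma=\mathrm{id}$ and $1+\sigma(x)=(1+x)^{-1}$, and that the transform $B\mapsto\widetilde B$ is involutive for it, i.e. $\widetilde{\widetilde B}=B$. Feeding these facts into the displayed formula for $\mathcal L$ yields the functional equation
$$\mathcal L(x)=-(1+x)^{n+1}\,\mathcal L\!\left(\frac{-x}{1+x}\right).$$
Equivalently, $\mathcal L$ is fixed by the involution $\Phi(f)(x)=-(1+x)^{n+1}f(\sigma(x))$, which acts on monomials by $\Phi(x^j)=(-1)^{j+1}x^j(1+x)^{n+1-j}$. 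As a sanity check, this immediately forces $L_0=0$ and, on comparing coefficients of $x^2$, recovers $L_2=\tfrac n2 L_1$ exactly as computed in Section \ref{sectionSystemLinearEquations}.

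Comparing coefficients of $x^k$ in $\Phi(\mathcal L)=\mathcal L$ and isolating the diagonal term $j=k$ gives
$$\bigl(1+(-1)^k\bigr)L_k=\sum_{j<k}(-1)^{j+1}\binom{n+1-j}{k-j}L_j.$$
For odd $k$ the left side vanishes, producing an identity among lower $L_j$ that I will not need; for even $k$ it reads $L_k=\tfrac12\sum_{j<k}(-1)^{j+1}\binom{n+1-j}{k-j}L_j$, expressing each even-indexed $L_k$ as a fixed linear combination of strictly lower $L_j$. A strong induction on even $k$ then shows that every even-indexed $L_k$ is a linear combination of \emph{odd}-indexed $L_j$ with $j<k$ only: in the sum the odd $j$ appear directly, while each even $j<k$ is replaced, by the inductive hypothesis, by odd-indexed terms of still smaller index, the base case $L_0=0$ being the empty combination. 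Granting this, the hypothesis $L_1=L_3=\cdots=L_m=0$ forces $L_k=0$ for every even $k\le m+1$, since each such $L_k$ is built only from odd $L_j$ with $j<k\le m+1$, hence $j\le m$; together with $L_0=0$ this is precisely the assertion of the lemma.

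The main obstacle is conceptual rather than computational: recognizing that the binomial coefficients in (\ref{LinearSystem}) encode the substitution $x\mapsto -x/(1+x)$, so that the whole system collapses into a single symmetry of $\mathcal L$. Once the functional equation is in hand, the remainder is bookkeeping — verifying $\widetilde{\widetilde B}=B$ and $1+\sigma(x)=(1+x)^{-1}$, then running the triangular induction. I would only take care that the index $n+1$, the degree of $(1+x)B$, does not exceed the range of interest: since $m<n$ gives $m+1\le n$, every index that appears is at most $n$, so the extra coefficient of $x^{n+1}$ in $\mathcal L$ never enters the argument.
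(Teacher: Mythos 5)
Your proof is correct, and it takes a genuinely different route from the one in the paper. The paper's proof actually solves the system: after reducing (its Reductions 2--3, which use linearity and the shift structure of the binomial coefficients, and are only sketched) to the special case $b_0=1$, $b_2=b_4=\cdots=0$, it normalizes the resulting triangular system and introduces universal sequences $\theta_j,\alpha_j$ with trigonometric generating functions $\theta(t)=1/(1+\cos t)$ and $\alpha(t)=\sin t/(1+\cos t)$; the lemma then rests on the identity $\frac{d\alpha}{dt}=\theta$. You never solve for the $b_i$ at all: your functional equation $\mathcal L(x)=-(1+x)^{n+1}\mathcal L\bigl(-x/(1+x)\bigr)$ is correct and follows exactly as you say from $\widetilde{\widetilde B}=B$ and $1+\sigma(x)=(1+x)^{-1}$, and it yields the universal identities $(1+(-1)^k)L_k=\sum_{j<k}(-1)^{j+1}\binom{n+1-j}{k-j}L_j$ among the linear forms themselves; your triangular induction then proves the stronger structural statement that every even-indexed $L_k$ lies in the span of the odd-indexed $L_j$ with $j<k$, identically in $(b_0,\dots,b_n)$. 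This buys three things: all even indices $0,2,\dots,m+1$ are handled in one stroke (the paper's Claim 1 concerns only $j=m+1$ and must implicitly be re-applied for each smaller odd $m'$); the paper's sketched reduction and uniqueness steps are replaced by a purely formal coefficient comparison; and the identities you obtain hold with no hypothesis on the $b_i$, so they also re-derive $L_0=0$ and $L_2=\frac{n}{2}L_1$ and give the parity (``double point-blowups'') phenomenon directly. What the paper's route buys instead is explicit closed-form knowledge of the solution sequences, which your argument deliberately bypasses. Your closing caveat is also exactly the right one: since $m+1\le n$, the spurious coefficient $[x^{n+1}]\mathcal L=-b_n$ never enters the comparison, so every index used lies in the range $0\le j\le n$ where $[x^j]\mathcal L$ coincides with the paper's $L_j$.
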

\begin{proof}
Fixed $0\leq m< n$, where $m$ is odd. Let $b_0,b_1,\ldots ,b_n$ satisfy the equations (\ref{LinearSystem}) for all $j=1,3,5,\ldots ,m$. To prove Lemma
\ref{problem1} it suffices to prove the following claim:

Claim 1: $b_0,b_1,\ldots ,b_n$ also satisfy (\ref{LinearSystem}) for $j=m+1$.

The proof is divided in several steps.

i) Reduction 1: In equations (\ref{LinearSystem}) with $j=1,3,\ldots ,m$, pushing all $b_i$ with $i$ odd to the left hand-sided and pushing all $b_i$
with $i$ even to the right hand-sided we can rewrite them as
\begin{eqnarray*}
&&2b_1=b_0\left (\stackrel{n-1}{1}\right ),\\
&&b_1\left (\stackrel{n-1}{2}\right )+2b_3=b_0\left (\stackrel{n}{3}\right )+b_2\left (\stackrel{n-3}{1}\right ),\\
&&b_1\left (\stackrel{n-1}{4}\right )+b_3\left (\stackrel{n-3}{2}\right )+2b_5=b_0\left (\stackrel{n}{5}\right )+b_2\left (\stackrel{n-2}{3}\right
)+b_4\left (\stackrel{n-5}{1}\right ),\\
&&\vdots \\
&&b_1\left (\stackrel{n-1}{m-1}\right )+b_3\left (\stackrel{n-3}{m-3}\right )+\ldots +b_{m-2}\left (\stackrel{n-m+2}{2}\right )+2b_{m}\\
&&=b_0\left (\stackrel{n}{m}\right )+b_{2}\left (\stackrel{n-2}{m-2}\right )+\ldots +b_{m-3}\left (\stackrel{n-m+3}{3}\right )+b_{m-1}\left
(\stackrel{n-m}{1}\right ).
\end{eqnarray*}

The equation (\ref{LinearSystem}) for $j=m+1$ which we want to prove in Claim 1 can be written as
\begin{eqnarray*}
&&b_1\left (\stackrel{n-1}{m}\right )+b_3\left (\stackrel{n-3}{m-2}\right )+\ldots +b_{m-2}\left (\stackrel{n-m+2}{3}\right )+b_{m}\left
(\stackrel{n-m+1}{1}\right )\\
&=&b_0\left (\stackrel{n}{m+1}\right )+b_2\left (\stackrel{n-2}{m-1}\right )+\ldots +b_{m-1}\left (\stackrel{n-m+1}{2}\right ).
\end{eqnarray*}

ii) Reduction 2: For any value of $b_0,b_2,b_4,\ldots ,b_{m-1}$ there exists a unique solution $b_1,b_3,\ldots ,b_m$ to the system (\ref{LinearSystem})
for $j=1,3,\ldots ,m$. For a proof of this claim we can use the rewritten system in Reduction 1.

iii) Reduction 3: Claim 1 is true in general case if we can prove it is true for the special case $b_0=1,b_2=b_4,\ldots =0$. For a proof use the special
structure of the rewritten system in Reduction 1.

From now on in this proof we will assume that $b_0=1,b_2=b_4=\ldots =0$. We rewrite Reduction 1 as

iv) Reduction 4: In equations (\ref{LinearSystem}) with $j=1,3,\ldots ,m$, pushing all $b_i$ with $i$ odd to the left hand-sided and pushing all $b_i$
with $i$ even to the right hand-sided we can rewrite them as
\begin{eqnarray*}
2b_1&=&\left (\stackrel{n-1}{1}\right ),\\
b_1\left (\stackrel{n-1}{2}\right )+2b_3&=&\left (\stackrel{n}{3}\right ),\\
b_1\left (\stackrel{n-1}{4}\right )+b_3\left (\stackrel{n-3}{2}\right )+2b_5&=&\left (\stackrel{n}{5}\right ),\\
&&\vdots \\
b_1\left (\stackrel{n-1}{m-1}\right )+b_3\left (\stackrel{n-3}{m-3}\right )+\ldots +b_{m-2}\left (\stackrel{n-m+2}{2}\right )+2b_{m}&=&\left
(\stackrel{n}{m}\right ).
\end{eqnarray*}

The equation (\ref{LinearSystem}) for $j=m+1$ which we want to prove in Claim 1 can be written as
\begin{eqnarray*}
b_1\left (\stackrel{n-1}{m}\right )+b_3\left (\stackrel{n-3}{m-2}\right )+\ldots +b_{m-2}\left (\stackrel{n-m+2}{3}\right )+b_{m}\left
(\stackrel{n-m+1}{1}\right )=\left (\stackrel{n}{m+1}\right ).
\end{eqnarray*}
v) Reduction 5: Define
\begin{eqnarray*}
\beta _1&=&\frac{b_1}{n},\\
\beta _3&=&\frac{b_3}{n(n-1)(n-2)},\\
\beta _5&=&\frac{b_5}{n(n-1)(n-2)(n-3)(n-4)},\\
&&\ldots
\end{eqnarray*}
then $\beta _!,\beta _3,\beta _5,\ldots $ satisfy the following system of equations
\begin{eqnarray*}
2\beta _1&=&1-\frac{1}{n},\\
\frac{\beta _1}{2!}+2\beta _3&=&\frac{1}{3!},\\
\frac{\beta _1}{4!}+\frac{\beta _3}{2!}+2\beta _5&=&\frac{1}{5!},\\
&&\ldots ,\\
\frac{\beta _1}{(m-1)!}+\frac{\beta _3}{(m-3)!}+\ldots \frac{\beta _{m-2}}{2!}+2\beta _{m}&=&\frac{1}{m!}.
\end{eqnarray*}
What we want to prove in Claim 1 can be written as
\begin{eqnarray*}
\frac{\beta _1}{m!}+\frac{\beta _{3}}{(m-2)!}+\ldots +\frac{\beta _{m-2}}{3!}+\beta _{m}(1+\frac{1}{n-m})=\frac{1}{(m+1)!}
\end{eqnarray*}
vi) Reduction 6: A universal system of linear equations

Let $\theta _1, \theta _3,\theta _5,\ldots $ be the unique sequence satisfying the following system of infinitely many linear equations
\begin{eqnarray*}
2\theta _1&=&1,\\
\frac{\theta _1}{2!}+2\theta _3&=&0,\\
\frac{\theta _1}{4!}+\frac{\theta _3}{2!}+2\theta _5&=&0,\\
&&\ldots ,\\
\end{eqnarray*}
Then, for any sequence $c_1,c_3,c_5,\ldots $, the unique solution to
\begin{eqnarray*}
2z _1&=&c_1,\\
\frac{z _1}{2!}+2z_3&=&c_3,\\
\frac{z_1}{4!}+\frac{z _3}{2!}+2z _5&=&c_5,\\
&&\ldots ,\\
\end{eqnarray*}
is
\begin{eqnarray*}
z_1&=&c_1\theta _1,\\
z_3&=&c_3\theta _1+c_1\theta _3,\\
z_5&=&c_5\theta _1+c_3\theta _3+c_5\theta _1,\\
\ldots
\end{eqnarray*}
vii) Reduction 7: Let $\alpha _1,\alpha _3,\ldots $ be the unique sequence satisfying the following system
\begin{eqnarray*}
2\alpha _1&=&\frac{1}{1!},\\
\frac{\alpha _1}{2!}+2\alpha_3&=&\frac{1}{3!},\\
\frac{\alpha _1}{4!}+\frac{\alpha _3}{2!}+2\alpha _5&=&\frac{1}{5!},\\
&&\ldots
\end{eqnarray*}
Then it is easy to see that for $\beta _j$ in Reduction 4:
\begin{eqnarray*}
\beta _j&=&\alpha _j-\frac{1}{n}\theta _j,
\end{eqnarray*}
for all $j=1,3,\ldots ,m$, and what we wanted to prove in Claim 1 becomes
\begin{eqnarray*}
-\frac{1}{n}(\frac{\theta _1}{m!}+\frac{\theta _3}{(m-2)!}+\ldots +\frac{\theta _{m-2}}{3!}+\frac{\theta _m}{1!}-\frac{\theta
_m}{m})+\frac{1}{n-m}(\alpha _m-\frac{\theta _m}{m})=0.
\end{eqnarray*}
Hence Claim 1 is proved if we can prove the following Claim

Claim 2: For any $m\in \mathbb N$, $m$ odd then the following conclusions are true
\begin{equation}
\frac{\theta _1}{m!}+\frac{\theta _3}{(m-2)!}+\ldots +\frac{\theta _{m-2}}{3!}+\frac{\theta _m}{1!}-\frac{\theta _m}{m}=0,\label{Claim2.1}
\end{equation}
and
\begin{equation}
\alpha _m-\frac{\theta _m}{m}=0.\label{Claim2.2}
\end{equation}
viii) Proof of Claim 2:

Define a formal series
\begin{eqnarray*}
\theta (t)=\theta _1-t^2\theta _3+t^4\theta _5-t^6\theta _7+\ldots
\end{eqnarray*}
From the Reduction 6:
\begin{eqnarray*}
1=\theta (t).(2-\frac{t^2}{2!}+\frac{t^4}{4!}-\frac{t^6}{6!}\ldots )=\theta (t).(1+\cos t).
\end{eqnarray*}
Hence
\begin{eqnarray*}
\theta (t)=\frac{1}{1+\cos t}.
\end{eqnarray*}
Similarly, if we define
\begin{eqnarray*}
\alpha (t)=t\alpha _1-t^3\alpha _3+t^5\alpha _5\ldots
\end{eqnarray*}
then from Reduction 7
\begin{eqnarray*}
\alpha (t)=\frac{\sin t}{1+\cos t}.
\end{eqnarray*}
It follows that
\begin{eqnarray*}
\frac{d\alpha}{dt} =\theta (t),
\end{eqnarray*}
which proves (\ref{Claim2.2}).

From Reductions 6 and 7 we have
\begin{eqnarray*}
\alpha _m=\frac{\theta _1}{m!}+\frac{\theta _{3}}{(m-2)!}+\ldots +\frac{\theta _{m-2}}{3!}+\frac{\theta _{m}}{1!}.
\end{eqnarray*}
This equality and (\ref{Claim2.2}) imply (\ref{Claim2.1}). Hence we completed the proof of Lemma \ref{problem1}.
\end{proof}
\begin{lemma}
Let $n\geq 3$ be an odd integer. Let $a_0,\ldots ,a_n$ be a solution of the system of linear equations
\begin{eqnarray*}
L_j(a_0,a_1,\ldots ,a_n)=0
\end{eqnarray*}
for all $j=0,1,2,\ldots ,n-1$. Then
\begin{eqnarray*}
\sum _{j=2}^n(-1)^ja_j=0.
\end{eqnarray*}
\label{problem2}\end{lemma}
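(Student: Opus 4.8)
The plan is to encode the entire system (\ref{LinearSystem}) into a single polynomial identity and then extract the desired relation by evaluating at one cleverly chosen point. I would work throughout in the symmetric variables $b_j = a_{n-j}$ of (\ref{LinearSystem}). First I would rewrite the goal in these variables: a short reindexing $k = n-j$, together with $n$ odd (so $(-1)^{n-k} = -(-1)^k$), turns the target into $\sum_{j=2}^n (-1)^j a_j = -\sum_{k=0}^{n-2}(-1)^k b_k$, so it suffices to prove $\sum_{k=0}^{n-2}(-1)^k b_k = 0$.

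Next I would set up generating functions. Let $P(x) = \sum_{j=0}^n b_j x^j$. Clearing the common factor $-1$, equation (\ref{LinearSystem}) reads $b_j + b_{j-1} = \sum_{i=0}^j (-1)^i b_i \binom{n-i}{j-i}$. The left-hand side is the coefficient of $x^j$ in $(1+x)P(x)$; and since $\binom{n-i}{j-i}$ is the coefficient of $x^j$ in $x^i(1+x)^{n-i}$, the right-hand side is the coefficient of $x^j$ in the degree-$n$ polynomial $G(x) := \sum_{i=0}^n (-1)^i b_i x^i(1+x)^{n-i}$ (equivalently $G(x) = (1+x)^n P(-x/(1+x))$). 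Thus the hypothesis that $L_j = 0$ for $0 \le j \le n-1$ says exactly that $(1+x)P(x)$ and $G(x)$ agree in every degree from $0$ to $n-1$. Since $(1+x)P(x)$ has degree $n+1$ and $G$ has degree $n$, their difference is supported on the top two degrees:
\[
(1+x)P(x) - G(x) = c_n x^n + b_n x^{n+1}, \qquad c_n = (b_n + b_{n-1}) - \sum_{i=0}^n (-1)^i b_i,
\]
where $c_n$ is read off from the coefficient of $x^n$ on each side.

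The key step is to evaluate this identity at $x = -1$. The factor $1+x$ annihilates the left-hand side, while in $G(-1)$ every term with $i < n$ carries the factor $(1+x)^{n-i}\big|_{x=-1} = 0$, leaving $G(-1) = b_n$. Using $n$ odd to fix the signs of $x^n$ and $x^{n+1}$ at $x=-1$, the identity collapses to $0 = 2b_n - c_n$, that is $\sum_{i=0}^n (-1)^i b_i = b_{n-1} - b_n$. Splitting off the two top terms (again with $(-1)^{n-1}=1$ and $(-1)^n=-1$) then gives $\sum_{k=0}^{n-2}(-1)^k b_k = (b_{n-1}-b_n) - b_{n-1} + b_n = 0$, which is the claim.

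The step I expect to require the most care is the bookkeeping that pins the difference $(1+x)P(x)-G(x)$ to degrees $n$ and $n+1$ and yields the correct value of $c_n$, together with the repeated, sign-sensitive use of the hypothesis that $n$ is odd (including the conventions $b_{-1}=0$ and $0^0=1$). The evaluation at $x=-1$ is the entire engine of the argument; once the polynomial identity is in place, everything else is a one-line substitution.
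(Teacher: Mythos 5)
Your proof is correct, and at heart it is the same argument as the paper's: the paper's one-line proof --- ``take the difference between the sum of odd-th equations and the sum of even-th equations'' --- is exactly the alternating combination $\sum_{j=0}^{n-1}(-1)^jL_j=0$ of the hypotheses, and evaluating your polynomial identity $(1+x)P(x)-G(x)=c_nx^n+b_nx^{n+1}$ at $x=-1$ computes precisely that alternating sum of the coefficient equations. Your generating-function packaging simply automates the two collapses that the paper leaves implicit: the factor $1+x$ vanishing at $x=-1$ replaces the telescoping of the left-hand sides, and the vanishing of $(1+x)^{n-i}$ at $x=-1$ replaces the hand evaluation of the partial alternating binomial sums on the right.
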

\begin{proof}
To prove the equality we need only to take the difference between the sum of odd-th equations and the sum of even-th equations.
\end{proof}
\section{(1,1)-regularization for exceptional cases: $n=$odd}\label{sectionRegularizeOddCase}
In this Section we show how to (1,1)-regularize the maps $k_F$ in exceptional cases when $n\geq 3$ is odd . Recall that $F(z)=a_nz^n+a_{n-1}z^{n-1}+\ldots +a_1z+a_0$ is a
polynomial ($a_n\not= 0$).

Let $Y$ be the manifold constructed in Section 4 in \cite{bedford-kim-tuyen-abarenkova-maillard}. If $a_0=\frac{2}{m+1}$ for some $m=0,1,2,\ldots $, let
$\widehat{Y}$ denote the manifold constructed by blowing up the manifold $Y$ at the points $r_0,q_0,r_1,q_1,\ldots ,r_m,q_m$ as in the proof of Theorem
\ref{DegreeEvenCase}. Otherwise, i.e. if $a_0\not=\frac{2}{m+1}$ for any $m=0,1,2,\ldots $, let $\widehat{Y}$ denote the space $Y$ itself. Let us denote
$Y_1=\widehat{Y}$. Let us also denote some more notations
\begin{eqnarray*}
ep_0&=&\frac{1}{a_n}\in P_{n-1},~ip_0=\frac{1}{a_n}\in P_{n-1},\\
ep_1&=&-\frac{a_{n-1}}{a_n^2}\in P_{n},~ip_1=\frac{-(n-1)a_n+a_{n-1}}{a_n^2}\in P_{n}.
\end{eqnarray*}
The above equations mean that $ep_0$ is a point of $P_{n-1}$ with local coordinate $\frac{1}{a_n}$, and so on. Here "ep" means "exceptional point" that
is points which is the image of some exceptional curves, and "ip" means "indeterminate point" that is points which blowups to some curves.

For convenience we recall the action of the induced map $k_{Y_1}:Y_1\rightarrow Y_1$ (see \cite{bedford-kim-tuyen-abarenkova-maillard}):
\begin{eqnarray*}
k_{Y_1}:~C_1,C_2,P_1,P_2,\ldots ,P_{n-3}&\mapsto&ep_1\in P_n,\\
k_{Y_1}^{-1}:~C_1,P_1,P_2,\ldots ,P_{n-3}&\mapsto&ip_1\in P_n,
\end{eqnarray*}
and $k_{Y_1}:P_n\longleftrightarrow P_{n-2}$ with
\begin{eqnarray*}
P_n\ni u&\mapsto&\frac{1}{-a_n^2u-(n-1)a_n+a_{n-1}}\in P_{n-2},\\
P_{n-2}\ni u&\mapsto&-\frac{1}{a_n^2u}-\frac{a_{n-1}}{a_n}\in P_n.
\end{eqnarray*}
We will prove the following result
\begin{theorem}
Let $n=deg(F)\geq 3$ be odd. Let $Y$ be the manifold constructed in Section 4 in \cite{bedford-kim-tuyen-abarenkova-maillard}. Let $L_j:\mathbb
C^{n+1}\rightarrow\mathbb C$ ($j=0,1,\ldots ,n$) be linear functions defined in Section 3. Then there exists $0\leq j\leq n-2$ such that
\begin{eqnarray*}
L_i(a_0,a_1,\ldots ,a_n)=0
\end{eqnarray*}
for all $0\leq i\leq j$. Moreover, we choose $j$ as large as possible, that is either $j=n-2$ or $L_{j+1}(a_0,a_1,\ldots ,a_n)\not= 0$.

If $a_0=\frac{2}{m+1}$ for some $m=0,1,2,\ldots $, let $\widehat{Y}$ denote the manifold constructed by blowing up the manifold $Y$ at the points
$r_0,q_0,r_1,q_1,\ldots ,r_m,q_m$ as in the proof of Theorem \ref{DegreeEvenCase}. Otherwise, i.e. if $a_0\not=\frac{2}{m+1}$ for any $m=0,1,2,\ldots $, let $\widehat{Y}$
denote the space $Y$ itself.

We have one of the following alternatives

Case 1: $j<n-2$. Then there exists spaces $Y_1=\widehat{Y},Y_2,\ldots ,Y_{j},Y_{j+1}=Z$, where $Y_{i+1}\rightarrow Y_{i}$ is a one point-blowup for
$i\leq j-1$, such that the induced map $k_{Z}:Z\rightarrow Z$ is A.S.

Case 2: $j=n-2$.

Subcase 2.1: $a_0\not= \frac{n+1}{2}+\frac{m}{2(1+m)}$ for any $m=0,1,2,\ldots $. Then there exists spaces $Y_1=\widehat{Y},Y_2,\ldots ,Y_{n}=Z$, where
$Y_{i+1}\rightarrow Y_{i}$ is a one point-blowup for $i\leq n-1$, such that the induced map $k_{Z}:Z\rightarrow Z$ is A.S.

Case 2.2: $a_0=\frac{n+1}{2}+\frac{m}{2(1+m)}$ for some $m=0,1,2,\ldots $. Let $Y_n$ be the space in Subcase 2.1. Then there exists spaces
$Y_n,Y_{n+1},\ldots ,Y_{n+2m+2}=Z$, where $Y_{n+i+1}\rightarrow Y_{n+i}$ is a one point-blowup for $i\leq 2m+1$, such that the induced map
$k_{Z}:Z\rightarrow Z$ is A.S.\label{OddCase}\end{theorem}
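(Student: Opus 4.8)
The plan is to carry out the resolution of indeterminacy by following the forward orbit of the exceptional curves and blowing up precisely at the points where an exceptional curve would otherwise be mapped onto an indeterminacy point. First I would recall from the setup that on $Y_1=\widehat{Y}$ the problematic exceptional curves are $C_1,C_2,P_1,\ldots,P_{n-3}$, all of which land at the single point $ep_1=-a_{n-1}/a_n^2\in P_n$, while $ip_1=(-(n-1)a_n+a_{n-1})/a_n^2\in P_n$ is the point that blows up under $k_{Y_1}^{-1}$. The map $k_{Y_1}$ is A.S.\ precisely when no forward image of an exceptional curve meets an indeterminacy point; the obstruction is therefore the coincidence of an iterate of $ep_1$ with $ip_1$. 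The linear functions $L_j$ are engineered exactly so that $L_0,\ldots,L_j$ vanishing encodes how many steps the orbit $ep_1,\,k_{Y_1}ep_1,\,k_{Y_1}^2ep_1,\ldots$ survives before colliding with $ip_1$: I would verify by a direct induction, using the explicit birational formulas for $k_{Y_1}$ restricted to the exceptional fibers $P_n\longleftrightarrow P_{n-2}$, that the condition $L_i=0$ for $0\le i\le j$ is equivalent to the orbit passing through $i$ intermediate fibers cleanly, and that $L_{j+1}\neq 0$ signals the first collision. This gives the required maximal $j$.

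With $j$ in hand, Case~1 ($j<n-2$) is the generic subcase: I would perform a chain of single point-blowups $Y_2,\ldots,Y_{j+1}=Z$, each resolving the image of the successive orbit point, and then check that on $Z$ every exceptional curve of $k_Z$ has a forward orbit that terminates at a genuine curve rather than a point of indeterminacy. The key verification is local: in the chosen coordinate charts for each new exceptional fiber I would compute the induced action of $k_Z$ and confirm that $C_2$ (the last surviving exceptional curve) is sent to a curve, exactly as in the computation in Lemma~\ref{ASEvenCase}. For Case~2 ($j=n-2$) the orbit runs to the very end of the available fibers, and a second, independent obstruction appears governed by the arithmetic condition $a_0=\frac{n+1}{2}+\frac{m}{2(1+m)}$; in Subcase~2.1 this condition fails and the construction closes after the blowups producing $Y_n=Z$, whereas in Subcase~2.2 the orbit of $C_2$ collides once more and I would blow up the further orbit points $0\in E_2$ and its $2m+2$ predecessors, paralleling the even-case construction of Theorem~\ref{DegreeEvenCase}, to obtain $Z=Y_{n+2m+2}$.

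The interaction with the parameter $a_0$ enters through the factor $\widehat{Y}$: when $a_0=2/(m+1)$ the curve $C_4$ has its own orbit landing on an indeterminacy point, and I would handle this exactly as in Section~\ref{sectionDegreeEvenCase} by the preliminary blowups $r_0,q_0,\ldots,r_m,q_m$, so that the two resolution processes (the $C_4$-chain and the $ep_1$/$C_2$-chain) are independent and can be performed simultaneously. This independence is what lets me quote the even-case blowups verbatim inside $\widehat{Y}$ and then run the odd-case argument on top. Throughout, the verification that $k_Z$ is A.S.\ relies on the criterion from \cite{diller-favre}: it suffices to check that no exceptional curve of $k_Z$ is eventually mapped to a point of indeterminacy, which after the blowups reduces to the finitely many local coordinate computations described above.

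The main obstacle I expect is the inductive identification of the orbit-survival length with the vanishing of $L_0,\ldots,L_j$. The functions $L_j$ in \eqref{LinearFunction} carry binomial coefficients with alternating signs, and tracking how the linear action $u\mapsto \frac{1}{-a_n^2u-(n-1)a_n+a_{n-1}}$ composed with $u\mapsto -\frac{1}{a_n^2u}-\frac{a_{n-1}}{a_n}$ iterates, and then matching the resulting rational expressions for the orbit points against the collision condition with $ip_1$, is the delicate bookkeeping. Here the ``double point-blowups'' phenomenon of Lemma~\ref{problem1} is essential: it guarantees that $L_j=0$ for the odd indices forces $L_{j+1}=0$ for the subsequent even index, so the maximal $j$ in Case~1 is automatically even, which is precisely what makes the orbit land on a fiber admitting a clean one-point resolution rather than a double collision. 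I would lean on Lemma~\ref{problem1} and Lemma~\ref{problem2} to organize this parity argument and to pin down the exact stopping behavior at $j=n-2$.
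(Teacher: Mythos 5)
Your overall skeleton (the case split on $j$ and on the arithmetic of $a_0$, the independent treatment of the $C_4$-chain inside $\widehat{Y}$, the Diller--Favre criterion, and the orbit blowups in Subcase 2.2) matches the paper, but the central mechanism you propose for Case 1 is not the right one, and the induction you describe has no correct statement to induct on. You interpret the vanishing of $L_0,\ldots,L_j$ as measuring ``how many steps the orbit $ep_1,\,k_{Y_1}ep_1,\,k_{Y_1}^2ep_1,\ldots$ survives before colliding with $ip_1$'' on the fixed space $Y_1$. That scenario cannot occur: the composition $k_{Y_1}^2$ restricted to $P_n$ (through $P_{n-2}$) is the translation $u\mapsto u+(ep_1-ip_1)$, so either $ep_1=ip_1$ and the collision is immediate, or $ep_1\neq ip_1$ and the orbit $ep_1+l(ep_1-ip_1)$ never reaches $ip_1$; there is no intermediate ``survival for $i$ steps.'' What the conditions $L_i=0$ actually encode (Claim 1 in the paper's Lemma \ref{FirstBlowups}) is a cascade of infinitely near points: $L_1=0$ iff $ep_1=ip_1$ on $Y_1$; blowing up that one point creates a new fiber $P_{n+1}$ carrying a new exceptional point $ep_2$ and a new indeterminacy point $ip_2$ (defined via Taylor coefficients $c_j,d_j,\gamma_j$ of explicit rational functions, not via iteration of $k$), and $L_2=0$ iff $ep_2=ip_2$ on $Y_2$; and so on. The blowup centers in Case 1 are points lying on the successively created exceptional fibers --- infinitely near points over $ep_1$ --- not points of the forward orbit of $ep_1$, so your plan of ``resolving the image of the successive orbit point'' does not describe the construction that proves the theorem.

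Relatedly, your proposed A.S.\ verification for Case 1 --- checking that after the blowups the last exceptional curve ``is sent to a curve, exactly as in Lemma \ref{ASEvenCase}'' --- is not what happens there. On $Y_{j+1}$ the curves $C_1,C_2,P_1,\ldots,P_{n-1-(j+1)-1}$ remain exceptional: they are still contracted to the single point $ep_{j+1}\in P_{n-1+j+1}$. The map is nevertheless A.S.\ because $k_{Y_{j+1}}^2$ acts on $P_{n-1+j+1}$ as translation by $ep_{j+1}-ip_{j+1}$, which is nonzero precisely because $L_{j+1}\neq 0$, so the forward orbit $ep_{j+1}+l(ep_{j+1}-ip_{j+1})$ never lands on the indeterminacy point $ip_{j+1}$. (The mechanism of Lemma \ref{ASEvenCase}, where a curve genuinely ceases to be exceptional, appears only for the $C_4$/$R_0$ chain and at the terminal stage of Case 2, where the new fiber is exchanged with $C_1$.) Finally, Lemma \ref{problem1} plays no role in the proof of this particular theorem --- it is used afterwards to show that the maximal $j$ is even in Theorem \ref{DegreeOddCase}; what this proof needs is Lemma \ref{problem2}, which gives $d_n-\gamma_n=2a_0$ and hence the collision condition $a_0=\frac{n+1}{2}+\frac{l}{2(l+1)}$ for the orbit of $C_2$ on $E_2$ in Case 2. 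Without the infinitely-near-point formulation of Claim 1 and the translation argument, both the identification of the maximal $j$ and the A.S.\ conclusion in Case 1 are unproved in your proposal.
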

The proof of Theorem \ref{OddCase} is divided into some steps.
\begin{lemma}
Let $1\leq m\leq n-3$ be an integer. Assume that
\begin{equation}
L_j(a_0,a_1,\ldots ,a_n)=0\label{FirstBlowups.1}
\end{equation}
for all $j=0,1,2,\ldots ,m$, and
\begin{eqnarray*}
L_{m+1}(a_0,a_1,\ldots ,a_n)\not= 0.
\end{eqnarray*}
Construct a sequence of spaces $Y_j$ ($2\leq j\leq m+1$) by induction as follows: $Y_j$ is the blowup of $Y_{j-1}$ at a point $ip_{j-1}=ep_{j-1}\in
P_{n-1+j-1}$, and $P_{n-1+j}$ is the exceptional fiber of the blowup $Y_j\rightarrow Y_{j-1}$. Here the points $ip_j$ and $ep_j$ ($1\leq j\leq m+1$) are
defined as
\begin{eqnarray*}
ip_j&=&\frac{c_j+d_j}{a_n^2},~ep_j=\frac{\gamma _j+c_j}{a_n^2},
\end{eqnarray*}
where $-a_n^2u+c_j$ is the coefficient of $s^j$ of the Taylor expansion of the function
\begin{eqnarray*}
\frac{1+s}{ip_0+ip_1s+\ldots +ip_{j-1}s^{j-1}+s^ju}
\end{eqnarray*}
near $s=0$, $d_j$ is the coefficient of $s^j$ in the polynomial $s^nF(-1-\frac{1}{s})$, $\gamma _j$ is the coefficient of $s^j$ in the polynomial
$-(1+s)s^nF(\frac{1}{s})$. Moreover for small values of $s$:
\begin{eqnarray*}
\frac{1+s}{ip_0+ip_1s+\ldots +ip_{j-1}s^{j-1}+s^ju}+s^nF(-1-\frac{1}{s})&=&(-a_n^2u+c_j)s^j+d_js^j+O(s^{j+1}),\\
\frac{1+s}{ip_0+ip_1s+\ldots +ip_{j-1}s^{j-1}+s^ju}-(1+s)s^nF(\frac{1}{s})&=&(-a_n^2u+c_j)s^j+\gamma _js^j+O(s^{j+1}).
\end{eqnarray*}
Then the induced map $k_{Y_{m+1}}:Y_{m+1}\rightarrow Y_{m+1}$ is A.S., and acts as follows:
\begin{eqnarray*}
k_{Y_{m+1}}:~C_1,C_2,P_1,\ldots ,P_{n-1-(m+1)-1}&\mapsto&ep _{m+1}\in P_{n-1+m+1},\\
k_{Y_{m+1}}^{-1}:C_1,P_1,\ldots ,P_{n-1-(m+1)-1}&\mapsto&ip_{m+1}\in P_{n-1+m+1},
\end{eqnarray*}
and $k_{Y_{m+1}}:P_{n-1+m+1}\longleftrightarrow P_{n-1-(m+1)}$ is
\begin{eqnarray*}
k_{Y_{m+1}}:~P_{n-1+m+1}\ni u&\mapsto&\frac{(-1)^{n-(m+1)}}{-a_n^2u+d_{m+1}+c_{m+1}}\in P_{n-1-(m+1)},\\
k_{Y_{m+1}}:~P_{n-1-(m+1)}\ni u&\mapsto&\frac{(-1)^{n-(m+1)}}{-a_n^2u}+\frac{c_{m+1}+\gamma _{m+1}}{a_n^2}\in P_{n-1+(m+1)}.
\end{eqnarray*}
In these formulas we choose the coordinate projection at $P_{n-1+j}$ as
\begin{eqnarray*}
(s,u)\mapsto [s^n(ip_0+ip_1s+\ldots +ip_{j-1}s^{j-1}+s^ju):1:s^{n-1}(ip_0+ip_1s+\ldots +ip_{j-1}s^{j-1}+s^ju)].
\end{eqnarray*}
\label{FirstBlowups}\end{lemma}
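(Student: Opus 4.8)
The plan is to induct on the step index $j$, establishing for each $1\le j\le m+1$ that the induced map $k_{Y_j}$ acts by the asserted Möbius formula between $P_{n-1+j}$ and $P_{n-1-j}$, with exceptional image $ep_j$ and forward indeterminacy $ip_j$ (the statement's formulas, read with $j$ in place of $m+1$), and in passing that $ip_j=ep_j$ holds exactly when $L_j=0$. The base case $j=1$ is the recalled action of $k_{Y_1}$: one checks that the stated swap $P_n\longleftrightarrow P_{n-2}$, the image $ep_1=-a_{n-1}/a_n^2$ and the indeterminacy $ip_1=(-(n-1)a_n+a_{n-1})/a_n^2$ are the $j=1$ specializations, in the displayed chart at $P_n$. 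For the inductive step I assume the form of $k_{Y_j}$ and that $L_j=0$ (so $j\le m$); then $ip_j=ep_j$, the prescribed blowup of $Y_j$ at this point is legitimate, and it remains to read off $k_{Y_{j+1}}$.

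The computational core is to evaluate $k_F$ in the chart $(s,u)\mapsto[s^n\phi:1:s^{n-1}\phi]$ with $\phi=ip_0+ip_1s+\ldots+ip_{j-1}s^{j-1}+s^ju$. Substituting into the homogeneous formula for $k_F$ and cancelling the common factor $s^{\,n^2-1}\phi^{\,n}$, one sees the image tends to $e_1$, so it lies on a fiber over $e_1$ and is pinned down by two Laurent expansions near $s=0$. A direct manipulation gives $s^n\big(k_1/k_0+(1+s)\big)=\frac{1+s}{\phi}+s^nF(-1-\rho)$ with $\rho=\frac{1}{s(1-s^n\phi)}=\frac1s+O(s^{\,n-1})$; since $j\le m+1<n$, replacing $\rho$ by $1/s$ changes $s^nF$ only at order $s^n$, so to the relevant order this equals $\frac{1+s}{\phi}+s^nF(-1-1/s)$. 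Because $\phi(0)=ip_0=1/a_n$, the $u$-dependent part of its $s^j$-coefficient is $-a_n^2u$, which isolates the common singular part $-a_n^2u+c_j$ of the two generating functions in the statement. Reading off the expansion identifies the Möbius map $u\mapsto(-1)^{n-j}/(-a_n^2u+c_j+d_j)$ from $P_{n-1+j}$ to $P_{n-1-j}$ and the indeterminacy $ip_j=(c_j+d_j)/a_n^2$ where the denominator vanishes; the parallel computation for the inverse map, $s^n\big((k^{-1})_1/(k^{-1})_0-(1+s)\big)=\frac{1+s}{\phi}-(1+s)s^nF(1/s)$ (here $F$ enters through $F(x_2/x_0)=F(1/s)$), identifies the indeterminacy of $k^{-1}$, that is the exceptional image $ep_j=(c_j+\gamma_j)/a_n^2$. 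Finally the substitution $u=ip_j+su'$ turns the chart of $P_{n-1+j}$ into precisely the asserted chart of $P_{n-1+j+1}$, closing the induction on the form of the map.

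The algebraic bridge to the functions $L_j$ is the identity $d_j-\gamma_j=-L_j$. To prove it I expand $d_j=\sum_{i=0}^{j}(-1)^{\,n-i}a_{n-i}\binom{n-i}{j-i}$, the $s^j$-coefficient of $s^nF(-1-1/s)$, and $\gamma_j=-(a_{n-j}+a_{n-j+1})$, the $s^j$-coefficient of $-(1+s)s^nF(1/s)$, and compare with the definition of $L_j$; since $n$ is odd, $(-1)^{\,n-i}=-(-1)^i$, and the binomial sum collapses to exactly $\gamma_j-L_j$. Hence $ip_j-ep_j=(d_j-\gamma_j)/a_n^2=-L_j/a_n^2$, so $ip_j=ep_j$ iff $L_j=0$. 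This justifies the blowup at each stage $j\le m$ and, at the last stage, forces $ep_{m+1}\ne ip_{m+1}$ because $L_{m+1}\ne0$.

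Finally, algebraic stability of $k_{Y_{m+1}}$ follows from the criterion of \cite{diller-favre}: no exceptional curve may meet an indeterminacy point under forward iteration. The coincidences on $P_{n-1+1},\ldots,P_{n-1+m}$ have all been removed by the preceding blowups, and the $C_4$-orbit was already resolved in the construction of $\widehat Y=Y_1$ exactly as in Theorem \ref{DegreeEvenCase}; the remaining exceptional curves $C_1,C_2,P_1,\ldots,P_{n-1-(m+1)-1}$ all land at $ep_{m+1}$, which by $L_{m+1}\ne0$ differs from $ip_{m+1}$ and is therefore a generic point of $P_{n-1+m+1}$ whose image is a generic point of $P_{n-1-(m+1)}$, after which the orbit stays inside the now-regular Möbius cycle on the lower fibers. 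I expect the genuine obstacle to be the local expansion of the second paragraph: tracking the $s^j$-coefficient correctly through the nested charts, controlling the $O(s^{\,n-1})$ discrepancy $\rho-1/s$, and verifying that the newly produced chart is verbatim the one in the statement. The binomial identity and the stability check are then comparatively routine.
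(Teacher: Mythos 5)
Your construction is essentially the paper's own: the same induction over the blowup index, the same charts, the same two Laurent expansions (one for $k$ via $\rho=\frac{1}{s(1-s^n\phi)}$, one exact for $k^{-1}$), the same device of obtaining $k_{Y_{j}}$ on $P_{n-1-j}$ by computing $k_{Y_j}^{-1}$ on $P_{n-1+j}$ and inverting, and the same identification of the exceptional image as the unique indeterminacy of $k_{Y_j}^{-1}$ on the new fiber. Your explicit verification of the bridge $ip_j-ep_j=(d_j-\gamma_j)/a_n^2=-L_j/a_n^2$ (using that $n$ is odd, so $(-1)^{n-i}=-(-1)^i$) is actually more detailed than the paper, which dismisses this point with ``it is not much difficult to check''; that part of your write-up is an improvement.

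However, your last step --- algebraic stability of $k_{Y_{m+1}}$ --- has a genuine gap. You argue that the exceptional curves land at $ep_{m+1}\neq ip_{m+1}$, hence at ``a generic point of $P_{n-1+m+1}$ whose image is a generic point of $P_{n-1-(m+1)}$, after which the orbit stays inside the now-regular M\"obius cycle on the lower fibers.'' Staying inside the cycle proves nothing: the indeterminate point $ip_{m+1}$ lies \emph{on} $P_{n-1+m+1}$, i.e.\ inside that very cycle, and the forward orbit of $ep_{m+1}$ returns to $P_{n-1+m+1}$ infinitely often. Algebraic stability requires that every one of these returns avoids $ip_{m+1}$ --- infinitely many conditions --- and genericity of the starting point does not propagate along an orbit; a priori $k_{Y_{m+1}}^{2l}(ep_{m+1})$ could equal $ip_{m+1}$ for some $l\geq 1$ even though $ep_{m+1}\neq ip_{m+1}$. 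The paper closes exactly this point by composing the two M\"obius formulas: $k_{Y_{m+1}}^{2}$ restricted to $P_{n-1+m+1}$ is the \emph{translation} $u\mapsto u+(ep_{m+1}-ip_{m+1})$, so the orbit of the exceptional curves is the arithmetic progression $ep_{m+1}+l(ep_{m+1}-ip_{m+1})$, and it hits $ip_{m+1}$ only if $(l+1)(ep_{m+1}-ip_{m+1})=0$, which is impossible since $L_{m+1}\neq 0$. You have all the ingredients for this (your two formulas compose to precisely that translation), but your proposal never performs the composition, and without it the central conclusion of the lemma --- that $k_{Y_{m+1}}$ is A.S. --- remains unproved.
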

\begin{proof}
1) Step 1: We will prove by induction on $m$ that if (\ref{FirstBlowups.1}) is satisfied for $0\leq j\leq m$ then a sequence of spaces $Y_j$ ($1\leq
j\leq m+1$) exists and satisfies all the conclusion of Lemma \ref{FirstBlowups} except the conclusion that $k_{Y_{m+1}}$ is A.S.

Proof: Assume by induction that spaces $Y_0,Y_1,\ldots ,Y_l$ ($l<m+1$) was constructed with the following properties: $Y_j$ is the blowup of $Y_{j-1}$ at
a point $ip_{j-1}=ep_{j-1}\in P_{n-1+j-1}$, and $P_{n-1+j}$ is the exceptional fiber of the blowup $Y_j\rightarrow Y_{j-1}$. Here the points $ip_j$ and
$ep_j$ ($1\leq j\leq l$) are defined as
\begin{eqnarray*}
ip_j&=&\frac{c_j+d_j}{a_n^2},~ep_j=\frac{\gamma _j+c_j}{a_n^2},
\end{eqnarray*}
where $-a_n^2u+c_j$ is the coefficient of $s^j$ of the Taylor expansion of the function
\begin{eqnarray*}
\frac{1+s}{ip_0+ip_1s+\ldots +ip_{j-1}s^{j-1}+s^ju}
\end{eqnarray*}
near $s=0$, $d_j$ is the coefficient of $s^j$ in the polynomial $s^nF(-1-\frac{1}{s})$, $\gamma _j$ is the coefficient of $s^j$ in the polynomial
$-(1+s)s^nF(\frac{1}{s})$. Moreover for small values of $s$:
\begin{eqnarray*}
\frac{1+s}{ip_0+ip_1s+\ldots +ip_{j-1}s^{j-1}+s^ju}+s^nF(-1-\frac{1}{s})&=&(-a_n^2u+c_j)s^j+d_js^j+O(s^{j+1}),\\
\frac{1+s}{ip_0+ip_1s+\ldots +ip_{j-1}s^{j-1}+s^ju}-(1+s)s^nF(\frac{1}{s})&=&(-a_n^2u+c_j)s^j+\gamma _js^j+O(s^{j+1}).
\end{eqnarray*}
The induced map $k_{Y_{l}}:Y_{l}\rightarrow Y_{l}$ acts as follows:
\begin{eqnarray*}
k_{Y_{l}}:~C_1,C_2,P_1,\ldots ,P_{n-1-l-1}&\mapsto&ep _{l}\in P_{n-1+l},\\
k_{Y_{l}}^{-1}:C_1,P_1,\ldots ,P_{n-1-l-1}&\mapsto&ip_{l}\in P_{n-1+l},
\end{eqnarray*}
and $k_{Y_{l}}:P_{n-1+l}\longleftrightarrow P_{n-1-l}$ is
\begin{eqnarray*}
k_{Y_{l}}:~P_{n-1+l}\ni u&\mapsto&\frac{(-1)^{n-l}}{-a_n^2u+d_{l}+c_{l}}\in P_{n-1-l},\\
k_{Y_{l}}:~P_{n-1-l}\ni u&\mapsto&\frac{(-1)^{n-l}}{-a_n^2u}+\frac{c_{l}+\gamma _{l}}{a_n^2}\in P_{n-1+l}.
\end{eqnarray*}

The starting point $l=0$ can be easily checked to satisfy the above conditions.

i) Claim 1: The following two facts are equivalent

Fact 1: $ep_j=ip_j$ for all $0\leq j\leq l$.

Fact 2: $L_j(a_0,a_1,\ldots ,a_n)=0$ for all $0\leq j\leq l$.

Proof of Claim 1: From the definition of $ep_j$ and $ip _j$ it is not much difficult to check Claim 1 (the reader may check with concrete examples to see
how this works).

ii) Claim 2: $k_{Y_l}$ is not A.S.

Proof of Claim 2: From Claim 1 and  the action of $k_{Y_l}$ we see that
\begin{eqnarray*}
k_{Y_{l}}:~C_1,C_2,P_1,\ldots ,P_{n-1-l-1}\mapsto ep _{l}=ip _l\in P_{n-1+l},
\end{eqnarray*}
which is an indeterminate point of $k_{Y_l}$. Hence $k_{Y_l}$ is not A.S.

iii) Claim 3: Let $Y_{l+1}$  be the blowup of $Y_l$ at the point $ep_l=ip_l\in P_{n-1+l}$, and let $P_{n-1+l+1}$ be the exceptional fiber of this blowup.
Choose the coordinate projection at $P_{n-1+l+1}$ as described in the statement of Lemma \ref{FirstBlowups}. Then the action of the induced map
$k_{Y_{l+1}}:Y_{l+1}\rightarrow Y_{l+1}$ is
\begin{eqnarray*}
k_{Y_{l+1}}:~C_1,C_2,P_1,\ldots ,P_{n-1-(l+1)-1}&\mapsto&ep _{l+1}\in P_{n-1+l+1},\\
k_{Y_{l+1}}^{-1}:C_1,P_1,\ldots ,P_{n-1-(l+1)-1}&\mapsto&ip_{l+1}\in P_{n-1+l+1},
\end{eqnarray*}
and $k_{Y_{l+1}}:P_{n-1+l+1}\longleftrightarrow P_{n-1-(l+1)}$ is
\begin{eqnarray*}
k_{Y_{l+1}}:~P_{n-1+l+1}\ni u&\mapsto&\frac{(-1)^{n-(l+1)}}{-a_n^2u+d_{l+1}+c_{l+1}}\in P_{n-1-(l+1)},\\
k_{Y_{l+1}}:~P_{n-1-(l+1)}\ni u&\mapsto&\frac{(-1)^{n-(l+1)}}{-a_n^2u}+\frac{c_{l+1}+\gamma _{l+1}}{a_n^2}\in P_{n-1+(l+1)}.
\end{eqnarray*}

Proof of Claim 3: First we compute the image of a generic point $u\in P_{n-1+l+1}$ under the map $k_{Y_{l+1}}$. Use the formula
\begin{eqnarray*}
k_{Y_{l+1}}[u]=\lim _{s\rightarrow 0}k[s^n(ip_0+ip_1s+\ldots +ip_{l}s^{l}+s^{l+1}u):1:s^{n-1}(ip_0+ip_1s+\ldots +ip_{l}s^{l}+s^{l+1}u)],
\end{eqnarray*}
and the fact that
\begin{eqnarray*}
\frac{1+s}{ip_0+ip_1s+\ldots +ip_{j-1}s^{j-1}+s^ju}+s^nF(-1-\frac{1}{s})&=&(-a_n^2u+c_j)s^j+d_js^j+O(s^{j+1}),\\
\frac{1+s}{ip_0+ip_1s+\ldots +ip_{j-1}s^{j-1}+s^ju}-(1+s)s^nF(\frac{1}{s})&=&(-a_n^2u+c_j)s^j+\gamma _js^j+O(s^{j+1}).
\end{eqnarray*}
for all $j\leq l+1$, it is not hard to see that
\begin{eqnarray*}
k_{Y_{l+1}}:~P_{n-1+l+1}\ni u&\mapsto&\frac{(-1)^{n-(l+1)}}{-a_n^2u+d_{l+1}+c_{l+1}}\in P_{n-1-(l+1)}.
\end{eqnarray*}
Now we compute the image of a generic point $u\in P_{n-1+(l+1)}$ under the map $k_{Y_{l+1}}^{-1}$. Use the formula
\begin{eqnarray*}
k_{Y_{l+1}}^{-1}[u]=\lim _{s\rightarrow 0}k^{-1}[s^n(ip_0+ip_1s+\ldots +ip_{l}s^{l}+s^{l+1}u):1:s^{n-1}(ip_0+ip_1s+\ldots +ip_{l}s^{l}+s^{l+1}u)],
\end{eqnarray*}
and arguing as above, it is not hard to see that
\begin{eqnarray*}
k_{Y_{l+1}}^{-1}:~P_{n-1+l+1}\ni u&\mapsto&\frac{(-1)^{n-(l+1)}}{-a_n^2u+\gamma_{l+1}+c_{l+1}}\in P_{n-1-(l+1)}.
\end{eqnarray*}
Then the image of a generic point $u\in P_{n-1-(l+1)}$ using $k_{Y_{l+1}}=(k_{Y_{l+1}}^{-1})^{-1}$ can be computed as follows
\begin{eqnarray*}
k_{Y_{l+1}}:~P_{n-1-(l+1)}\ni u\mapsto \frac{(-1)^{n-(l+1)}}{-a_n^2u}+\frac{\gamma _{l+1}+c_{l+1}}{a_n^2}\in P_{n-1+l+1}.
\end{eqnarray*}
(The reason why we computed $k_{Y_{l+1}}:P_{n-1-(l+1)}\rightarrow P_{n-1+(l+1)}$ via $k_{Y_{l+1}}^{-1}:P_{n-1+(l+1)}\rightarrow P_{n-1-(l+1)}$ is because
the formula for coordinate projection of $P_{n-1-(l+1)}$ is much more simpler than that of $P_{n-1+(l+1)}$.) Hence
$k_{Y_{l+1}}:P_{n-1+l+1}\longleftrightarrow P_{n-1-(l+1)}$. This fact, and the inductional assumption that
\begin{eqnarray*}
k_{Y_{l}}:~C_1,C_2,P_1,\ldots ,P_{n-1-l-1}&\mapsto&ep _{l}\in P_{n-1+l},
\end{eqnarray*}
imply that $C_1,C_2,P_1,\ldots ,P_{n-1-(l+1)-1}$ are exceptional curves for $k_{Y_{l+1}}$, and hence their images must be the points lie in
$P_{n-1+(l+1)}$ which is indeterminate points for $k_{Y_{l+1}}^{-1}$. From the formula for $k_{Y_{l+1}}^{-1}:P_{n-1+(l+1)}\rightarrow P_{n-1-(l+1)}$
which we found above, there is only such a point which is exactly $ep_{l+1}$. Hence
\begin{eqnarray*}
k_{Y_{l+1}}:~C_1,C_2,P_1,\ldots ,P_{n-1-(l+1)-1}&\mapsto&ep _{l+1}\in P_{n-1+l+1}.
\end{eqnarray*}
Similarly
\begin{eqnarray*}
k_{Y_{l+1}}^{-1}:C_1,P_1,\ldots ,P_{n-1-(l+1)-1}&\mapsto&ip_{l+1}\in P_{n-1+l+1}.
\end{eqnarray*}

Using the above Claims we complete the proof of Step 1.

2) Step 2: Completion of the proof of Lemma \ref{FirstBlowups}: In Step 1 we showed that a sequence of spaces $Y_j$ ($1\leq j\leq m+1$) exists and
satisfies all the conclusion of Lemma \ref{FirstBlowups} except the conclusion that $k_{Y_{m+1}}$ is A.S. Now we show that $k_{Y_{m+1}}$ is A.S.

From Step 1 we have
\begin{eqnarray*}
k_{Y_{m+1}}^2:P_{n-1+m+1}\ni u\mapsto u+ep_{m+1}-ip_{m+1}\in P_{n-1+m+1}.
\end{eqnarray*}
Hence the orbit of the exceptional curves are
\begin{eqnarray*}
k_{Y_{m+1}}^{2l+1}:~C_1,C_2,P_1,\ldots ,P_{n-1-(m+1)-1}\mapsto ep _{m+1}+l(ep_{m+1}-ip_{m+1})\in P_{n-1+m+1}
\end{eqnarray*}
never land on the indeterminate point $ip_{m+1}\in P_{n-1+m+1}$, since $ep_{m+1}\not= ip_{m+1}$ as can be easily seen from Claim 1 in Step 1 and our
assumption that $L_{m+1}(a_0,a_1,\ldots ,a_n)\not= 0$. This implies that $k_{Y_{m+1}}$ is A.S.
\end{proof}
\begin{lemma}
Assume that
\begin{equation}
L_j(a_0,a_1,\ldots ,a_n)=0\label{FirstBlowups.1}
\end{equation}
for all $j=0,1,2,\ldots ,n-3$. Construct the spaces $Y_1,\ldots ,Y_{n-2}$ as described in Lemma \ref{FirstBlowups}.

Case 1: $L_{n-2}(a_0,a_1,\ldots ,a_n)\not= 0$. Then the induced map $k_{Y_{n-2}}$ is A.S.

Case 2: $L_{n-2}(a_0,a_1,\ldots ,a_n)=0$. Then the induced map $k_{Y_{n-2}}$ is not A.S, and $ep_{n-2}=ip_{n-2}$. Construct the space $Y_{n-1}$ as the
blowup of $Y_{n-1}$ at the point $ep_{n-2}$, and call $P_{n-1+n-1}$ the exceptional fiber of this blowup $Y_{n-1}$. Then the action of the induced map
$k_{Y_{n-1}}$ is
\begin{eqnarray*}
k_{Y_{n-1}}:~P_{n-1+n-1}&\longleftrightarrow&C_1,~C_2\mapsto ep_{n-1}\mapsto [0:0:1]=e_2,
\end{eqnarray*}
where $ep_{n-1}\in P_{n-1+n-1}$ is constructed in the same way as $ep_1,\ldots ,ep_{n-2}$. The map $k_{Y_{n-1}}$ has no indeterminate point lying in
$P_{n-1+n-1}$, but it is not A.S.

Let $Y_n$ be the blowup of $Y_{n-1}$ at two points $ep_{n-1}\in P_{n-1+n-1}$ and $e_2=[0:0:1]$, call $P_{n-1+n}$ and $E_2$ the exceptional fibers of this
blowup $Y_n\rightarrow Y_{n-1}$. Let the coordinate projection at $P_{n-1+n}$ as
\begin{eqnarray*}
(s,u)\mapsto [s^n(ep_0+ep_1s+\ldots +ep_{n-1}s^{n-1}+s^nu):1:s^{n-1}(ep_0+ep_1s+\ldots +ep_{n-1}s^{n-1}+s^nu)],
\end{eqnarray*}
and the coordinate projection at $E_{2}$ is
\begin{eqnarray*}
(s,u)\mapsto [s:su:1].
\end{eqnarray*}
(Recall that we do not have a point $ip_{n-1}$, however we do have the points $ip_0=ep_0,ip_1=ep_1,\ldots ,ip_{n-2}=ep_{n-2}$.) Under these coordinates
then the induced map $k_{Y_n}:Y_n\rightarrow Y_n$ is
\begin{eqnarray*}
k_{Y_n}:C_2\mapsto ep_n\in P_{n-1+n},
\end{eqnarray*}
and $k_{Y_n}:P_{n-1+n}\longleftrightarrow E_2$ as
\begin{eqnarray*}
k_{Y_n}:~P_{n-1+n}\ni u&\mapsto&-a_n^2u+c_n+d_n-(n+1)\in E_2,\\
k_{Y_n}:~E_2\ni u&\mapsto&\frac{-u+c_n+\gamma _n+1}{a_n^2} \in P_{n-1+n}.
\end{eqnarray*}
Here the constants $ep_n,c_n,d_n,\gamma _n$ are
\begin{eqnarray*}
ep_n=\frac{c_n+\gamma _n}{a_n^2},
\end{eqnarray*}
and $d_n$ is the coefficient of $s^n$ in the polynomial $s^nF(-1-\frac{1}{s})$, $\gamma _n$ is the coefficient of $s^n$ in the polynomial
$(1+s)s^nF(\frac{1}{s})$, and when using Taylor's expansion for $s$ small enough
\begin{eqnarray*}
\frac{1+s}{ep_0+ep_1s+\ldots +ep_{n-1}s^{n-1}+s^nu}+s^nF(-1-\frac{1}{s})&=&(-a_n^2u+c_n+d_n)s^n+O(s^{n+1}).\\
\frac{1+s}{ep_0+ep_1s+\ldots +ep_{n-1}s^{n-1}+s^nu}-(1+s)s^nF(-1-\frac{1}{s})&=&(-a_n^2u+c_n+\gamma _n)s^n+O(s^{n+1}).
\end{eqnarray*}
Moreover $0\in E_2$ is the only indeterminate point lying in $E_2$ of $k_{Y_n}$.

Subcase 2.1: $a_0\not= \frac{n+1}{2}+\frac{l}{2(1+l)}$ for any $l=0,1,2,\ldots $. Then the induced map $k_{Y_{n}}:Y_n\rightarrow Y_n$ is A.S.

Subcase 2.2: $a_0=\frac{n+1}{2}+\frac{l}{2(1+l)}$ for some $l=0,1,2,\ldots $. Let $Z$ be the space constructed by blowing up $Y_n$ at points
\begin{eqnarray*}
ep_n&\in &P_{n-1+n},~k_{Y_n}(ep_n)\in E_2,\\
k_{Y_n}^2(ep_n)&\in&P_{n-1+n},\\
\ldots&&\\
k_{Y_n}^{2l}(ep_n)&\in&P_{n-1+n},~k_{Y_n}^{2l+1}(ep_n)=0\in E_2.
\end{eqnarray*}
Then the induced map $k_Z:Z\rightarrow Z$ is A.S.
\label{HigherBlowup}\end{lemma}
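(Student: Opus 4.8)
The plan is to dispatch Case~1 by citing the inductive machinery already built and to concentrate the work on Case~2, where the ``double point-blowup'' occurs. For Case~1 the hypotheses are exactly those of Lemma~\ref{FirstBlowups} with $m=n-3$ (namely $L_j=0$ for $0\le j\le n-3$ and $L_{n-2}\ne 0$), so that lemma produces $Y_1,\dots,Y_{n-2}$ and already asserts that $k_{Y_{n-2}}$ is A.S.; nothing new is needed. (For $n=3$ the range $1\le m\le n-3$ is empty, and one falls back on the direct computation of Section~\ref{sectionExamplen=3}, which is the case $a_2\ne a_3$ there.)

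For Case~2 I would first note that $L_{n-2}=0$ forces $ep_{n-2}=ip_{n-2}$ by the equivalence $ep_j=ip_j\Leftrightarrow L_j=0$ established in Claim~1 of Lemma~\ref{FirstBlowups}; hence this common point of $P_{n-1+n-2}$ is at once the image of the exceptional curves and an indeterminacy point, so $k_{Y_{n-2}}$ is not A.S.\ and we blow it up to form $Y_{n-1}$. The structural novelty is that the tower has now reached its bottom: the partner of the new fiber under the swap is no longer a freshly created $P$-fiber but the original curve $C_1$, so that $k_{Y_{n-1}}:P_{n-1+n-1}\longleftrightarrow C_1$. I would verify this swap, together with the images of $C_1$ and $C_2$, by the same limiting local-coordinate computation used in Claim~3 of Lemma~\ref{FirstBlowups}, now carried out in the chart for $P_{n-1+n-1}$ prescribed in the statement.

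The heart of the argument is the identity $C_2\mapsto ep_{n-1}\mapsto e_2=[0:0:1]$, i.e.\ that the forward orbit of $C_2$ lands exactly on the indeterminacy point $e_2$. This is not accidental: since $n$ is odd, $n-2$ is odd, and from $L_0=\cdots=L_{n-2}=0$ Lemma~\ref{problem1} (with $m=n-2$) yields the extra vanishing $L_{n-1}=0$; equivalently, Lemma~\ref{problem2} gives $\sum_{j=2}^n(-1)^ja_j=0$, that is $F(-1)=a_0-a_1$. I would make the link explicit by computing $k_{Y_{n-1}}(ep_{n-1})$ in the chart $(s,u)\mapsto[s:su:1]$ for $E_2$ and checking that $L_{n-1}=0$ is precisely the condition for the limit to equal $[0:0:1]$. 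One then blows up $Y_{n-1}$ at the two points $ep_{n-1}$ and $e_2$, creating $P_{n-1+n}$ and $E_2$, and reads $k_{Y_n}$ off the stated Taylor expansions; this gives the swap $P_{n-1+n}\longleftrightarrow E_2$, the image $C_2\mapsto ep_n$, and the fact that $0\in E_2$ is the only indeterminacy point lying in $E_2$.

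It then remains to analyze the dynamics on $E_2$. Composing the two halves of the swap shows that $k_{Y_n}^2$ restricts to a translation $u\mapsto u+\tau$ of $E_2$, and the forward orbit of $C_2$ in $E_2$ is the arithmetic progression $k_{Y_n}(ep_n)+l\tau$, $l\ge 0$. Using $d_n=F(-1)=a_0-a_1$ and $\gamma_n=-(a_0+a_1)$ read off from the defining expansions, both the starting point and the step simplify to $k_{Y_n}(ep_n)=2a_0-(n+1)$ and $\tau=2a_0-(n+2)$, so the progression meets $0\in E_2$ exactly when $a_0=\frac{n+1}{2}+\frac{l}{2(1+l)}$ for some integer $l\ge 0$; it is precisely here that the alternating-sum relation of Lemma~\ref{problem2} cancels the dependence on $a_1$ and leaves a threshold in $a_0$ alone. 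In Subcase~2.1 the progression misses $0$, so no exceptional curve maps to an indeterminacy point and $k_{Y_n}$ is A.S.; in Subcase~2.2 the orbit reaches $0$ after $2l+1$ steps, and I would resolve it by the tower of one-point blowups along $ep_n,k_{Y_n}(ep_n),\dots,0$ used in the proof of Theorem~\ref{DegreeEvenCase}, the resulting $Z$ then having no exceptional curve landing on an indeterminacy point. The main obstacle I anticipate is exactly this third-and-fourth-paragraph computation: converting the algebraic vanishing $L_{n-1}=0$ into the geometric statement $C_2\mapsto e_2$, and extracting from the $E_2$-dynamics the precise constant $\tau$ that yields the clean threshold $a_0=\frac{n+1}{2}+\frac{l}{2(1+l)}$; everything else is bookkeeping with the limiting local-coordinate formulas already exercised in Lemma~\ref{FirstBlowups}.
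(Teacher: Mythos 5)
Your proposal is correct and follows essentially the same route as the paper: Case~1 by direct appeal to Lemma~\ref{FirstBlowups}, then the double blowup at $ep_{n-1}$ and $e_2$, the computation of the swap $P_{n-1+n}\longleftrightarrow E_2$ by the same limiting local-coordinate method, and finally the translation dynamics on $E_2$ with step and starting point reduced via $d_n-\gamma_n=2a_0$ (Lemma~\ref{problem2}) to the threshold $a_0=\frac{n+1}{2}+\frac{l}{2(1+l)}$. If anything, you are more explicit than the paper's own proof, which compresses the key step $C_2\mapsto ep_{n-1}\mapsto e_2$ into ``can be proved as in Lemma~\ref{FirstBlowups}'' and leaves the reliance on Lemma~\ref{problem1} (i.e.\ $L_0=\cdots=L_{n-2}=0\Rightarrow L_{n-1}=0$) implicit, whereas you correctly surface it as the mechanism behind the double point-blowup.
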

\begin{proof}
Case 1 and the action of the induced map $k_{Y_{n-1}}\rightarrow k_{Y_{n-1}}$ can be proved as in Lemma \ref{FirstBlowups}.

Now the action
\begin{equation}
k_{Y_n}:~P_{n-1+n}\ni u\mapsto -a_n^2u+c_n+d_n-(n+1)\in E_2\label{HigherBlowup.1}
\end{equation}
can be computed as in Lemma \ref{FirstBlowups}. In the same way we can compute
\begin{eqnarray*}
k_{Y_n}^{-1}:~P_{n-1+n}\ni u \mapsto -a_n^2u+c_n+\gamma _n-1\in E_2,
\end{eqnarray*}
hence using $k_{Y_n}=(k_{Y_n}^{-1})^{-1}$ we get
\begin{equation}
k_{Y_n}:~E_2\ni u\mapsto\frac{-u+c_n+\gamma _n+1}{a_n^2} \in P_{n-1+n}.\label{HigherBlowup.2}
\end{equation}
That $0\in E_2$ is the only indeterminate point lying in $E_2$ of $k_{Y_n}$ is not hard to see, and that the image of $C_2$  is a point $ep_n\in
P_{n-1+n}$ can be proved by the same argument in the proof of Lemma \ref{FirstBlowups}. Now we compute $ep_n$. We have $C_2\cap E_2=1\in E_2$ which is
not an indeterminate point of $k_{Y_n}$, hence using (\ref{HigherBlowup.2})
\begin{eqnarray*}
ep_n=k_{Y_n}(C_2)=k_{Y_n}([1]_{E_2})=\frac{c_n+\gamma _n}{a_n^2}.
\end{eqnarray*}
From (\ref{HigherBlowup.1}) and (\ref{HigherBlowup.2}) we get
\begin{eqnarray*}
k_{Y_n}^2:~E_2\ni u\mapsto u-\gamma _n+d_n-(n+2).
\end{eqnarray*}
Then
\begin{eqnarray*}
k_{Y_n}(ep_n)=-\gamma _n+d_n-(n+1)\in E_2.
\end{eqnarray*}
Using the formulas of $\gamma _n$ and $d_n$, and using Lemma \ref{problem2} we have $d_n-\gamma _n=2a_0$. Hence the orbit of $C_2$ is
\begin{eqnarray*}
k_{Y_n}^{2l+2}:C_2\mapsto 2a_0(l+1)-(n+1)(l+1)-l\in E_2.
\end{eqnarray*}
This orbit lands on the indeterminate point $0\in E_2$ iff
\begin{eqnarray*}
2a_0(l+1)-(n+1)(l+1)-l=0
\end{eqnarray*}
for some $l=0,1,2,\ldots$. Then Case 2 easy follows.
\end{proof}
\section{Proof of Theorem \ref{DegreeOddCase}}\label{sectionProofTheoremDegreeOddCase}
In this section we prove Theorem \ref{DegreeOddCase}. Let $Z$ be the spaces constructed in Theorem \ref{OddCase}. Since the map $k_Z:Z\rightarrow Z$ is A.S.,
we obtain $\delta (k_F)$ as the spectral radius of $k_Z^*$.

1. Case $n=deg (F)$ is odd; $a_0\not= 2/(m+1)$ for any $m=0,1,2,\ldots $; $L_i(a_0,a_1,\ldots ,a_n)=0$ for any $0\leq i\leq h$, $L_{h+1}(a_0,\ldots
,a_n)\not= 0$ where $0\leq h<n-2$. As noted before, in this case $h$ must be an even integer.
\begin{lemma}
The spectral radius of $k_Z^*:Pic(Z)\rightarrow Pic(Z)$ is the largest real root of the polynomial $x^3-nx^2-(n+1-h)x-1.$
\label{CharacteristicPolynomialNOdd1}\end{lemma}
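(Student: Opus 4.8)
The plan is to run the same machine as in the proof of Theorem~\ref{DegreeEvenCase}: produce an A.S.\ model $Z$, read off the matrix of $k_Z^*$ from the orbit data, and extract its spectral radius. Since we are in the case $a_0\neq 2/(m+1)$, the auxiliary blowups of Theorem~\ref{DegreeEvenCase} are not performed, so $\widehat Y=Y$ and $Z=Y_{h+1}$ is exactly the space built in Lemma~\ref{FirstBlowups} with $m=h$; that lemma already guarantees $k_Z$ is A.S., whence $\delta(k_F)=\rho(k_Z^*)$. First I would fix the basis
\[
H_Z,\ E,\ Q,\ P_1,\ldots,P_n,\ P_{n+1},\ldots,P_{n+h}
\]
of $\mathrm{Pic}(Z)$, consisting of the classes already present on $Y$ together with the $h$ fibers $P_{n+1},\ldots,P_{n+h}$ created by the successive blowups of Lemma~\ref{FirstBlowups}, and express the exceptional curves $C_1,C_2,C_3,C_4$ in this basis exactly as in Section~\ref{sectionDegreeEvenCase}.

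Next I would compute $k_Z^*$ one generator at a time from the orbit description. The governing data are furnished by Lemma~\ref{FirstBlowups}: the transposition $P_{n+h}\leftrightarrow P_{n-h-2}$, the collapse $C_1,C_2,P_1,\ldots,P_{n-h-3}\mapsto ep_{h+1}\in P_{n+h}$ with its inverse $C_1,P_1,\ldots\mapsto ip_{h+1}$, and the remaining transposed or fixed classes ($Q\mapsto C_3\mapsto Q$ and the behaviour of $E$). As in the even case, each contracted fiber gives $k_Z^*=0$ on its class, each fiber that is the common image of a packet of exceptional curves pulls back to the sum of those curves (producing the $C_1+C_2+\sum P_j$ type expressions), and $k_Z^*(H_Z)=(2n+1)H_Z-\cdots$, the leading coefficient $2n+1$ being the algebraic degree of $k_F$ visible from $k_0=(x_0x_1-x_0^2)^nx_2$. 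The genuinely new point, absent in the even case, is the chain of infinitely near fibers $P_n,P_{n+1},\ldots,P_{n+h}$: I must carry the strict-transform corrections so that each $k_Z^*(P_{n+i})$ is written correctly in the chosen basis.

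With the matrix assembled I would form $\det(xI-k_Z^*)$ and factor it. I expect a product of a monomial $x^{a}$ and a unipotent/cyclotomic part (coming from the contracted and transposed fibers) times the essential cubic $x^3-nx^2-(n+1-h)x-1$, which carries the dynamically relevant eigenvalues; the parameter $h$ enters only through the shortened transposition $P_{n+h}\leftrightarrow P_{n-h-2}$, explaining its appearance in the linear coefficient. Because $k_Z$ is A.S., $k_Z^*$ preserves the nef cone, so by a Perron--Frobenius argument its spectral radius is an eigenvalue with nef eigenvector; since $x^3-nx^2-(n+1-h)x-1$ is negative at $x=n$ and tends to $+\infty$, its largest real root exceeds $n$, dominates every root of the remaining factors, and is therefore $\rho(k_Z^*)=\delta(k_F)$.

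The step I expect to be the main obstacle is the bookkeeping of the previous paragraph: computing $k_Z^*$ on the iterated-blowup classes $P_n,\ldots,P_{n+h}$ without mismatching the strict-transform multiplicities, and then checking that the cofactor expansion collapses to \emph{exactly} $x^3-nx^2-(n+1-h)x-1$ times factors all of whose roots have modulus at most the Perron root. The parity fact that $h$ is even (noted before the lemma) should be used at this stage to ensure the transposition block contributes no spurious eigenvalue larger than the cubic's root.
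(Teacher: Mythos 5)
Your proposal is correct and follows essentially the same route as the paper: the paper likewise takes $Z=Y_{h+1}$ from Lemma \ref{FirstBlowups} (with $\widehat Y=Y$ since $a_0\neq 2/(m+1)$), expresses $C_1,\dots,C_4$ and the pullback action in the basis $H_Z,E_1,Q,P_1,\dots,P_{n-1+h+1}$, and reads off the cubic from a characteristic polynomial. The only organizational difference is that the paper, instead of factoring the full determinant, computes the characteristic polynomial of the $4\times 4$ matrix of $k_Z^*$ restricted to $\{H_Z,Q,P_{n-1-h-1},P_{n-1+h+1}\}$ (the remaining basis classes span an invariant subspace with eigenvalues $0,\pm 1$, so they cannot carry the spectral radius), obtaining $-x(x^3-nx^2-(n+1-h)x-1)$; your Perron--Frobenius remark is a harmless addition, and the parity of $h$ is in fact not needed for this step.
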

\begin{proof}
Let $Z$ be the space constructed in Theorem \ref{OddCase}. Let $H_Z,E_1,Q,P_1,\ldots ,P_{n-1+h+1}$ be a basis for $Pic(Z)$ (see convenience in the proof
of Theorem \ref{DegreeEvenCase}). In this basis then
\begin{eqnarray*}
C_1&=&H_Z-E_1-Q-\sum _{j=1}^{n-1}(j+1)P_j-n\sum _{j=1}^{h+1}P_{n-1+j},\\
C_2&=&H_Z,~C_3=H_Z-E_1-Q-\sum _{j=1}^{n-1}jP_j-(n-1)\sum _{j=1}^{h+1}P_{n-1+j},\\
C_4&=&2H_Z-E_1-2Q-\sum _{j=1}^{n-1}jP_j-(n-1)\sum _{j=1}^{h+1}P_{n-1+j}.
\end{eqnarray*}
As in the proof of Theorem \ref{DegreeEvenCase}, $k_Z^*:Pic(Z)\rightarrow Pic(Z)$ acts as
\begin{eqnarray*}
k_Z^*(H_Z)&=&(2n+1)H_Z-nE_1-(n+1)Q-(n+1)\sum _{j=1}^{n-1}jP_j-\sum _{j=1}^{h+1}(n^2-1+j)P_{n-1+j},\\
k_Z^*(E_1)&=&E_1,~k_Z^*(Q)=H_Z-E_1-Q-\sum _{j=1}^{n-1}jP_j-(n-1)\sum _{j=1}^{h+1}P_{n-1+j},\\
k_Z^*(P_{n-1-j})&=&0,~j>h+1,~k_Z^*(P_{n-1-(h+1)})=P_{n-1+h+1},\\
k_Z^*(P_{n-1-j})&=&P_{n-1+j},~j=0,\ldots ,h,~k_Z^*(P_{n-1+j})=P_{n-1-j},~j=0,\ldots ,h,\\
k_Z^*(P_{n-1+h+1})&=&C_1+C_2+P_1+\ldots +P_{n-1-(h+1)}\\
&=& 2H_Z-\sum _{j=1}^{n-1-(h+1)}jP_j-\sum _{j=n-1-h}^{n-1}(j+1)P_j-n\sum _{j=1}^{h+1}P_{n-1+j}.
\end{eqnarray*}
The spectral radius of $k_Z^*$ can be computed as the greatest real zero of the characteristic polynomial of the matrix representation of $k_Z^*$ restricted to
$\{H_Z,Q,P_{n-1-h-1},P_{n-1+h+1}\}$ which is
$$M_1=\left (\begin{array}{llll}
2n+1&-(n+1)&-(n+1)(n-1-h-1)&-(n^2-1+h+1)\\
1&-1&-(n-1-h-1)&-(n-1)\\
0&0&0&1\\
2&-1&-(n-1-h-1)&-n
\end{array}\right )$$
The characteristic polynomial $P(x)=det(M_1-xI)$ of $M_1$ is $$P(x)=-x(x^3-nx^2-(n+1-h)x-1).$$ From this the conclusions of Lemma \ref{CharacteristicPolynomialNOdd1} follow.
\end{proof}

2. Case $n=deg (F)$ is odd; $a_0= 2/(m+1)$ for some $m=0,1,2,\ldots $; $L_i(a_0,a_1,\ldots ,a_n)=0$ for any $0\leq i\leq h$, $L_{h+1}(a_0,\ldots
,a_n)\not= 0$ where $0\leq h<n-2$. As noted before, in this case $h$ must be an even integer.
\begin{lemma}
The spectral radius of $k_Z^*:Pic(Z)\rightarrow Pic(Z)$ is the largest real root of the polynomial $x^{2m+1}(x^3-nx^2-(n-h+1)x-1)+x^3+x^2+nx+n-h-1.$
\label{CharacteristicPolynomialNOdd2}\end{lemma}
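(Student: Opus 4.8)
The plan is to run exactly the argument of Lemma \ref{CharacteristicPolynomialNOdd1}, the only new feature being that the hypothesis $a_0=2/(m+1)$ forces the extra blow-ups at $r_0,q_1,r_1,\ldots ,q_m,r_m$ made in the proof of Theorem \ref{DegreeEvenCase}, so that $Pic(Z)$ (for $Z$ as in Theorem \ref{OddCase}) now carries the additional classes $R_0,Q_1,R_1,\ldots ,Q_m,R_m$. First I would fix the basis $H_Z,E_1,Q,P_1,\ldots ,P_{n-1+h+1},R_0,Q_1,\ldots ,Q_m,R_m$ and write down $C_1,C_2,C_3,C_4$ in it; these are the superposition of the expressions of Lemma \ref{CharacteristicPolynomialNOdd1} with the chain corrections from Theorem \ref{DegreeEvenCase}, so that in particular $C_2=H_Z-R_m$ and both $C_3$ and $C_4$ acquire the $R_j,Q_j$ terms. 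I would then compute $k_Z^*$: on $H_Z,E_1,Q$ and on the $P_j,P_{n-1+j}$ it is as in Lemma \ref{CharacteristicPolynomialNOdd1} (with $C_2$ now feeding its $-R_m$ into $k_Z^*(P_{n-1+h+1})=C_1+C_2+P_1+\ldots +P_{n-1-(h+1)}$), while the new classes obey the cyclic rule $k_Z^*(R_0)=C_4$, $k_Z^*(R_j)=Q_j$, $k_Z^*(Q_j)=R_{j-1}$ exactly as in the even case.

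Rather than expand a large determinant directly, I would compute the characteristic polynomial through the eigenvalue equation $k_Z^*v=xv$ restricted to the invariant part spanned by $H_Z,Q,P_{n-1-h-1},P_{n-1+h+1}$ together with the chain. Writing $b$ for the $Q$-coordinate and $r_j,q_j$ for the chain coordinates, the decisive structural fact (to be checked in the previous step) is that the rows of $Q$ and of every $Q_j$ receive identical source terms from $H_Z,Q,P_{n-1+h+1},R_0$; consequently, setting $u_j=q_j-b$, the chain equations collapse to the one-step recursion $u_{j+1}=x^2u_j$ with $u_1=xr_0$, giving $r_j=x^{2j}r_0$ and in particular $r_m=x^{2m}r_0$. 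The closing equation from the $R_m$-row then reads $xb+r_0=x^{2m+1}r_0$, the mismatch-by-one between the $R_m$- and $Q_j$-coefficients in $C_4$ supplying the extra $+r_0$; hence $r_0=xb/(x^{2m+1}-1)$. This is precisely the point at which the length $2m+2$ cycle manufactures the factor $x^{2m+1}$.

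Substituting $r_0=xb/(x^{2m+1}-1)$ back into the four core equations for $H_Z,Q,P_{n-1-h-1},P_{n-1+h+1}$ leaves a homogeneous system in the core coordinates whose solvability condition, after clearing the denominator $x^{2m+1}-1$, takes the shape $(x^{2m+1}-1)\,g(x)+N(x)=0$, with $g(x)=x^3-nx^2-(n+1-h)x-1$ the cubic already found in Lemma \ref{CharacteristicPolynomialNOdd1} and $N(x)$ the leftover terms not carrying the factor $x^{2m+1}-1$. Regrouping, $(x^{2m+1}-1)g(x)+N(x)=x^{2m+1}g(x)+(N(x)-g(x))$, so the assertion is equivalent to the single polynomial identity $N(x)-g(x)=x^3+x^2+nx+n-h-1$. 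This is the only genuinely new computation, and it is entirely analogous to the even-case identity that turns the leftover terms into $x^2+n$ in Theorem \ref{DegreeEvenCase}; carrying the identical reduction through for the even case reproduces $x^{2m+1}(x^2-(n+1)x-1)+x^2+n$ exactly, which is my main evidence that the same reduction closes here.

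The hard part will be the bookkeeping in that last identity: the odd-case core is four dimensional rather than three, the coupling coefficients of $P_{n-1-h-1}$ and $P_{n-1+h+1}$ carry the $h$-dependence, and one must confirm the source-matching property that produces the clean recursion $u_{j+1}=x^2u_j$. Two cheap checks guard against slips: the answer must specialise to $g(x)$ of Lemma \ref{CharacteristicPolynomialNOdd1} when the chain is removed, and $x=1$ must be a root of the final polynomial, since evaluating $g(x)+(x^3+x^2+nx+n-h-1)$ at $x=1$ gives $(-2n+h-1)+(2n-h+1)=0$, mirroring the corresponding vanishing in the even case.
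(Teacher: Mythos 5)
Your proposal is correct and follows essentially the same route as the paper --- identical basis of $Pic(Z)$, identical curve classes (with $C_2=H_Z-R_m$), identical action of $k_Z^*$ including the cycle $k_Z^*(R_0)=C_4$, $k_Z^*(R_j)=Q_j$, $k_Z^*(Q_j)=R_{j-1}$, and the same invariant subspace; the only difference is that the paper asserts the characteristic polynomial outright while you extract it by collapsing the chain, and your reduction does close exactly as claimed (the recursion $u_{j+1}=x^2u_j$ with $u_1=xr_0$, the closing relation $xb+r_0=x^{2m+1}r_0$, and the final identity $N(x)-g(x)=x^3+x^2+nx+n-h-1$ all check out, reproducing $x^{2m+1}(x^3-nx^2-(n-h+1)x-1)+x^3+x^2+nx+n-h-1$). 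One point to watch when you verify your ``identical source terms'' fact: it requires $k_Z^*(P_{n-1+h+1})=C_1+C_2+P_1+\cdots+P_{n-1-(h+1)}$ to carry both $-Q$ and $-\sum_{j=1}^m Q_j$ (which it does, through $C_1$), whereas the paper's displayed formula for this class accidentally omits the $-E_1-Q$ terms, so at that step trust the geometric identity $C_1+C_2+P_1+\cdots+P_{n-1-(h+1)}$ rather than the display.
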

\begin{proof}
Let $Z$ be the space constructed in Theorem \ref{OddCase}. Let $H_Z,E_1,Q,P_1,\ldots ,P_{n-1+h+1}$, $Q_1,\ldots ,Q_m,R_0,R_1,\ldots ,R_m$ be a basis for
$Pic(Z)$. In this basis then
\begin{eqnarray*}
C_1&=&H_Z-E_1-Q-\sum _{j=1}^{n-1}(j+1)P_j-n\sum _{j=1}^{h+1}P_{n-1+j}-\sum _{j=1}^mQ_j,\\
C_2&=&H_Z-R_m,\\
C_3&=&H_Z-E_1-Q-\sum _{j=1}^{n-1}jP_j-(n-1)\sum _{j=1}^{h+1}P_{n-1+j}-\sum _{j=1}^mQ_j-\sum _{j=0}^mR_j,\\
C_4&=&2H_Z-E_1-2Q-\sum _{j=1}^{n-1}jP_j-(n-1)\sum _{j=1}^{h+1}P_{n-1+j}-2\sum _{j=1}^mQ_j-R_m.
\end{eqnarray*}
Then $k_Z^*:Pic(Z)\rightarrow Pic(Z)$ acts as
\begin{eqnarray*}
k_Z^*(H_Z)&=&(2n+1)H_Z-nE_1-(n+1)Q-(n+1)\sum _{j=1}^{n-1}jP_j-\sum _{j=1}^{h+1}(n^2-1+j)P_{n-1+j}\\
&&-(n+1)\sum _{j=1}^mQ_j-(n+1)R_m,\\
k_Z^*(E_1)&=&E_1,\\
k_Z^*(Q)&=&H_Z-E_1-Q-\sum _{j=1}^{n-1}jP_j-(n-1)\sum _{j=1}^{h+1}P_{n-1+j}-\sum _{j=1}^mQ_j-\sum _{j=0}^mR_j,\\
k_Z^*(P_{n-1-j})&=&0,~j>h+1,~k_Z^*(P_{n-1-(h+1)})=P_{n-1+h+1},\\
k_Z^*(P_{n-1-j})&=&P_{n-1+j},~j=0,\ldots ,h,~k_Z^*(P_{n-1+j})=P_{n-1-j},~j=0,\ldots ,h,\\
k_Z^*(P_{n-1+h+1})&=&2H_Z-\sum _{j=1}^{n-1-(h+1)}jP_j-\sum _{j=n-1-h}^{n-1}(j+1)P_j-n\sum _{j=1}^{h+1}P_{n-1+j}-\sum _{j=1}^mQ_j-R_m,\\
k_Z^*(R_0)&=&2H_Z-E_1-2Q-\sum _{j=1}^{n-1}jP_j-(n-1)\sum _{j=1}^{h+1}P_{n-1+j}-2\sum _{j=1}^mQ_j-R_m,\\
k_Z^*(R_j)&=&Q_j,~1\leq j\leq m,~k_Z^*(Q_j)=R_{j-1},~1\leq j\leq m.
\end{eqnarray*}
The spectral radius of $k_Z^*$ can be computed as the greatest real zero of the characteristic polynomial of the matrix representation $M_1$ of $k_Z^*$ restricted
 to $\{H_Z,Q,P_{n-1-h-1},P_{n-1+h+1}$, $Q_1,\ldots ,Q_m$, $R_0,R_1,\ldots ,R_m\}$ which is $$P(x)=-x[x^{2m+1}(x^3-nx^2-(n-h+1)x-1)+x^3+x^2+nx+n-h-1].$$
From this the conclusions of Lemma \ref{CharacteristicPolynomialNOdd2} follow.
\end{proof}

3. Case $n=deg (F)$ is odd; $a_0\not= 2/(m+1)$ for any $m=0,1,2,\ldots $; $a_0\not= \frac{n+1}{2}+\frac{l}{2(l+1)}$ for any $l=0,1,2,\ldots $;
$L_i(a_0,a_1,\ldots ,a_n)=0$ for any $0\leq i\leq n-2$.
\begin{lemma}
The spectral radius of $k_Z^*:Pic(Z)\rightarrow Pic(Z)$ is the largest real root of the polynomial $x^3-nx^2-2x-1.$
\label{CharacteristicPolynomialNOdd3}\end{lemma}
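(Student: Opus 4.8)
The plan is to follow verbatim the template of Lemmas~\ref{CharacteristicPolynomialNOdd1} and~\ref{CharacteristicPolynomialNOdd2}: exhibit an explicit basis of $Pic(Z)$, express the four exceptional curves $C_1,C_2,C_3,C_4$ and the images under $k_Z^*$ of all basis classes in that basis, and then read off the spectral radius from the characteristic polynomial of a small invariant block. The space here is $Z=Y_n$ produced in Subcase~2.1 of Theorem~\ref{OddCase}; the two excluded arithmetic conditions on $a_0$ (namely $a_0\neq 2/(m+1)$ and $a_0\neq \frac{n+1}{2}+\frac{l}{2(l+1)}$) guarantee that we are exactly in that subcase, so no further $r_i,q_i$-blowups and no $E_2$-orbit blowups are needed. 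A convenient basis is $H_Z,E_1,Q,E_2,P_1,\ldots ,P_{n-1+n}$, where $P_1,\ldots ,P_{n-1}$ are inherited from $Y$, the fibers $P_{n-1+1},\ldots ,P_{n-1+n}$ are those created in Lemmas~\ref{FirstBlowups} and~\ref{HigherBlowup}, and $E_2$ is the fiber over $e_2=[0:0:1]$ coming from the double point-blowup.

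First I would record the orbit data of $k_Z$ supplied by Theorem~\ref{OddCase} and Lemma~\ref{HigherBlowup}: $E_1$ is fixed, $Q\leftrightarrow C_3$, the central fiber $P_{n-1}$ is fixed, the swaps $P_{n-1+j}\leftrightarrow P_{n-1-j}$ hold for $1\le j\le n-2$, while the top of the chain now behaves differently from the case $h<n-2$: one has $P_{n-1+n-1}\leftrightarrow C_1$ together with $C_2\mapsto ep_n\in P_{n-1+n}$ and $P_{n-1+n}\leftrightarrow E_2$. The crucial structural point is that the sequence of blow-downs no longer terminates at an interior fiber $P_{n-1-h-1}$ (which is meaningless for $h=n-2$) but instead terminates in the pair $(P_{n-1+n},E_2)$ produced by the double point-blowup. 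In particular $C_2$ is no longer the clean class $H_Z$ of Lemma~\ref{CharacteristicPolynomialNOdd1}; since $e_2=[0:0:1]$ lies on $C_2=\{x_0=x_1\}$ and is among the blown-up points, one gets $C_2=H_Z-E_2$. Using this I would write out $C_1,C_2,C_3,C_4$ in the basis and compute $k_Z^*$ on each generator exactly as before; the only genuinely new entries are $k_Z^*(E_2)$ and $k_Z^*(P_{n-1+n})$, both of which follow from the explicit formulas $k_{Y_n}:P_{n-1+n}\leftrightarrow E_2$ and $k_{Y_n}:C_2\mapsto ep_n$ in Lemma~\ref{HigherBlowup}.

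With $k_Z^*$ in hand I would isolate the small invariant block carrying the spectral radius. By analogy with the $4\times 4$ block $M_1$ of the earlier two lemmas I expect this to be the span of $H_Z,Q$ together with the two terminal classes $P_{n-1+n},E_2$ created by the double point-blowup, the remaining classes ($E_1$, the interior $P_j$, and the swapped pairs) contributing only eigenvalues of modulus at most $1$. Computing the characteristic polynomial of the restricted matrix should then produce the factorization $-x\,(x^3-nx^2-2x-1)$, whence the spectral radius is the largest real root of the cubic, as claimed.

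The main obstacle I anticipate is bookkeeping rather than conceptual. The delicate point is to get the coefficients in the classes $C_1,C_2$ and in $k_Z^*(H_Z),k_Z^*(P_{n-1+n}),k_Z^*(E_2)$ exactly right, so that the linear coefficient of the cubic emerges as $-2$ and not the $-3$ that a naive substitution $h=n-2$ into $x^3-nx^2-(n+1-h)x-1$ of Lemma~\ref{CharacteristicPolynomialNOdd1} would predict. This one-unit discrepancy is precisely the contribution of the extra fiber $E_2$ from the double point-blowup, so I would double-check that $C_2=H_Z-E_2$ (rather than $C_2=H_Z$) and that the incidence data of $P_{n-1+n}$ and $E_2$ are correctly encoded in $k_Z^*$, since that single change is what lowers the linear term by one and distinguishes this case from Case~1.
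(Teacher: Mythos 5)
Your overall strategy---choose a basis of $Pic(Z)$, write the classes of $C_1,\ldots,C_4$ and the action of $k_Z^*$ in it, then read the spectral radius off a small block---is exactly the paper's, and several delicate points are right: in Subcase 2.1 of Theorem \ref{OddCase} one indeed has $Z=Y_n$, the class of $C_2$ is indeed $H_Z-E_2$ (the paper encodes this as $k_Z^*(P_{n-1+n})=E_2+C_2=H_Z$), and your orbit data agrees with Lemma \ref{HigherBlowup}. The genuine gap is your choice of block. You relegate $P_{n-1+n-1}$ to the ``swapped pairs contributing only eigenvalues of modulus at most $1$''. That is false: its partner under $k_Z$ is not another exceptional fiber but the \emph{curve} $C_1$, so at the level of the Picard group $k_Z^*(P_{n-1+n-1})=[C_1]=H_Z-E_1-Q-\sum_{j=1}^{n-1}(j+1)P_j-n\sum_{j=1}^{n}P_{n-1+j}-E_2$, which has nonzero components along $H_Z$, $Q$ and $E_2$; conversely $k_Z^*(H_Z)$ and $k_Z^*(Q)$ have nonzero $P_{n-1+n-1}$-components. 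Hence neither the span of your four classes $\{H_Z,Q,P_{n-1+n},E_2\}$ nor the span of the complementary basis vectors is $k_Z^*$-invariant, and ``the characteristic polynomial of the restricted matrix'' carries no spectral information. Concretely, the naive $4\times 4$ restriction has characteristic polynomial $(x-n)(x^3-nx^2-2x-1)$, not your predicted $-x(x^3-nx^2-2x-1)$: the root $x=n$ is spurious (it is not an eigenvalue of $k_Z^*$), and the fact that the largest root of the cubic happens to exceed $n$---so that your matrix accidentally has the correct spectral radius---is a coincidence your argument cannot certify.

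The repair is what the paper does: take the $5\times 5$ block on $\{H_Z,Q,P_{n-1+n-1},P_{n-1+n},E_2\}$. With that choice the complementary classes $E_1$, $P_1,\ldots,P_{n-1}$, $P_{n-1+1},\ldots,P_{n-1+n-2}$ genuinely are permuted among themselves by $k_Z^*$ (the swaps $P_{n-1-j}\leftrightarrow P_{n-1+j}$ for $j\le n-2$, with $E_1$ and $P_{n-1}$ fixed), so the spectrum of $k_Z^*$ is that permutation spectrum (modulus $1$) together with the eigenvalues of the $5\times 5$ matrix, whose characteristic polynomial is $-(x-1)(x+1)(x^3-nx^2-2x-1)$. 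This, rather than a $4\times 4$ computation, is what yields the claimed spectral radius; the moral is that in Case 3 the ``terminal'' coupling consists of three new classes ($P_{n-1+n-1}$, $P_{n-1+n}$, $E_2$), not two, precisely because the chain ends with $P_{n-1+n-1}\leftrightarrow C_1$ where $C_1$ is a curve of nontrivial class rather than a blow-up fiber.
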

\begin{proof}
Let $Z$ be the space constructed in Theorem \ref{OddCase}. Let $H_Z,E_1,Q,P_1,\ldots ,P_{n-1+n},E_2$ be a basis for $Pic(Z)$. Then  $k_Z^*:Pic(Z)\rightarrow Pic(Z)$ acts
as
\begin{eqnarray*}
k_Z^*(H_Z)&=&(2n+1)H_Z-nE_1-(n+1)Q-(n+1)\sum _{j=1}^{n-1}jP_j\\
&&-\sum _{j=1}^{n-1}(n^2-1+j)P_{n-1+j}-(n^2+n-2)P_{n-1+n}-nE_2,\\
k_Z^*(E_1)&=&E_1,~k_Z^*(Q)=H_Z-E_1-Q-\sum _{j=1}^{n-1}jP_j-(n-1)\sum _{j=1}^{n}P_{n-1+j},\\
k_Z^*(P_{n-1-j})&=&P_{n-1+j},~j=0,\ldots ,n-2,~k_Z^*(P_{n-1+j})=P_{n-1-j},~j=0,\ldots ,n-2,\\
k_Z^*(P_{n-1+n-1})&=&H_Z-E_1-Q-\sum _{j=1}^{n-1}(j+1)P_j-n\sum _{j=1}^nP_{n-1+j}-E_2,\\
k_Z^*(P_{n-1+n})&=&E_2+C_2=H_Z,~k_Z^*(E_2)=P_{n-1+n}.
\end{eqnarray*}
The spectral radius of $k_Z^*$ can be computed as the greatest real zero of the characteristic polynomial of the matrix representation $M_1$ of $k_Z^*$ restricted
to $\{H_Z,Q,P_{n-1+n-1},P_{n-1+n},E_2\}$, which is
$$M_1=\left ( \begin{array}{lllll}
2n+1&-(n+1)&-(n^2+n-2)&-(n^2+n-2)&-n\\
1&-1&-(n-1)&-(n-1)&0\\
1&-1&-n&-n&-1\\
1&0&0&0&0\\
0&0&0&1&0
\end{array}\right )$$
. The characteristic polynomial $P(x)=det(M_1-xI)$ of $M_1$ is $$P(x)=-(x-1)(x+1)(x^3-nx^2-2x-1).$$ From this the conclusions of Lemma \ref{CharacteristicPolynomialNOdd3} follow.
\end{proof}

4. Case $n=deg (F)$ is odd; $a_0= 2/(m+1)$ for some $m=0,1,2,\ldots $; $a_0\not= \frac{n+1}{2}+\frac{l}{2(l+1)}$ for any $l=0,1,2,\ldots $;
$L_i(a_0,a_1,\ldots ,a_n)=0$ for any $0\leq i\leq n-2$.
\begin{lemma}
The spectral radius of $k_Z^*:Pic(Z)\rightarrow Pic(Z)$ is the largest real root of the polynomial $x^{2m}(x^3-nx^2-2x-1)+x^2+x+n.$
\label{CharacteristicPolynomialNOdd4}\end{lemma}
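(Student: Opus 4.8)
The plan is to follow verbatim the strategy of Lemmas \ref{CharacteristicPolynomialNOdd1}--\ref{CharacteristicPolynomialNOdd3}: exhibit an explicit basis of $Pic(Z)$, write $k_Z^*$ as an integer matrix, restrict to the invariant subspace carrying the eigenvalue of largest modulus, and read off its characteristic polynomial. The point is that under the present hypotheses the space $Z$ of Theorem \ref{OddCase} carries \emph{both} families of exceptional fibers at once. Since $L_i=0$ for all $0\leq i\leq n-2$ and $a_0\not=\frac{n+1}{2}+\frac{l}{2(1+l)}$, Subcase 2.1 of Lemma \ref{HigherBlowup} produces the chain $P_1,\ldots ,P_{n-1+n},E_2$ exactly as in Lemma \ref{CharacteristicPolynomialNOdd3}; and since $a_0=\frac{2}{m+1}$, the proof of Theorem \ref{DegreeEvenCase} adds the cycle $R_0,Q_1,R_1,\ldots ,Q_m,R_m$ exactly as in Lemma \ref{CharacteristicPolynomialNOdd2}. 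I therefore take as basis the union $\{H_Z,E_1,Q,P_1,\ldots ,P_{n-1+n},E_2,Q_1,\ldots ,Q_m,R_0,\ldots ,R_m\}$ of the bases of those two lemmas.

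First I would express $C_1,C_2,C_3,C_4$ in this basis by superposing the two earlier computations: the coefficients on $P_1,\ldots ,P_{n-1+n},E_2$ are copied from Lemma \ref{CharacteristicPolynomialNOdd3}, while the coefficients on $Q_j,R_j$ (in particular $C_2=H_Z-R_m$ and the terms $-2\sum_{j=1}^m Q_j-R_m$ inside $C_4$) are copied from Lemma \ref{CharacteristicPolynomialNOdd2}. Next I would assemble $k_Z^*$ on each basis vector. The action on $H_Z,E_1,Q$ and the pairing $k_Z^*(P_{n-1-j})=P_{n-1+j}$, $k_Z^*(P_{n-1+j})=P_{n-1-j}$ for $0\leq j\leq n-2$ is inherited from Case 3 (now carrying additional $Q_j,R_j$ terms); the cycle contributes the clean shift $k_Z^*(R_j)=Q_j$, $k_Z^*(Q_j)=R_{j-1}$ with $k_Z^*(R_0)=C_4$, inherited from Case 2; and the decisive new link is $k_Z^*(P_{n-1+n})=E_2+C_2=H_Z-R_m$, which is where the orbit of $C_2$ now feeds into the $R_m$-coordinate instead of into $H_Z$ alone.

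With the matrix in hand I would discard $E_1$ (eigenvalue $1$) and the paired classes $P_1,\ldots ,P_{2n-3}$ (eigenvalues $\pm 1$), leaving the $(2m+6)$-dimensional block $M_1$ on $\{H_Z,Q,P_{n-1+n-1},P_{n-1+n},E_2,Q_1,\ldots ,Q_m,R_0,\ldots ,R_m\}$. I would compute $\det(M_1-xI)$ by expanding along the companion-type cycle $R_0,Q_1,\ldots ,R_m$; eliminating it should contribute the prefactor $x^{2m}$ and fold the two relations $k_Z^*(R_0)=C_4$ and $k_Z^*(P_{n-1+n})=H_Z-R_m$ into a single corrected $5\times 5$ determinant whose cubic part is the Case 3 polynomial $x^3-nx^2-2x-1$. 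I expect the essential factor $x^{2m}(x^3-nx^2-2x-1)+x^2+x+n$ to emerge, accompanied by spurious factors with roots on the unit circle (extending the $(x-1)(x+1)$ of Case 3) that do not affect the spectral radius.

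The main obstacle will be this last determinant, and in particular pinning down the exponent and the tail. Unlike Case 2, here the $E_2$ double-point-blowup and the $R_j,Q_j$ cycle are coupled through the \emph{two} relations above, so the reduction to the cubic core must track exactly how the orbit of $C_2$ reaches $E_2$ after an even number of steps (recall $k_{Y_n}^2$ acts on $E_2$ as a translation by Lemma \ref{HigherBlowup}) whereas $C_4$ threads the cycle, and why this produces the prefactor $x^{2m}$ rather than the $x^{2m+1}$ of Lemma \ref{CharacteristicPolynomialNOdd2}, together with the specific tail $x^2+x+n$. To guard against sign errors and to confirm that the discarded factors genuinely have modulus one, I would verify the final polynomial explicitly for $n=3$ and $m=0,1$ before asserting it in general.
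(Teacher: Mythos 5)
Your proposal follows exactly the paper's own proof: the same basis $\{H_Z,E_1,Q,P_1,\ldots ,P_{n-1+n},E_2,Q_1,\ldots ,Q_m,R_0,\ldots ,R_m\}$, the same action of $k_Z^*$ obtained by superposing Cases 2 and 3 (including the decisive relation $k_Z^*(P_{n-1+n})=E_2+C_2=H_Z-R_m$ and $k_Z^*(R_0)=C_4$), and the same restriction to the $(2m+6)$-dimensional block on $\{H_Z,Q,P_{n-1+n-1},P_{n-1+n},E_2,R_0,\ldots ,R_m,Q_1,\ldots ,Q_m\}$, whose characteristic polynomial the paper states as $-(x-1)x(x+1)[x^{2m}(x^3-nx^2-2x-1)+x^2+x+n]$, confirming your expected essential factor and unit-modulus spurious factors. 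This is the same argument, carried to the same level of detail as the paper, which likewise leaves the final determinant expansion to direct computation.
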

\begin{proof}
Let $Z$ be the space constructed in Theorem \ref{OddCase}. Let $H_Z,E_1,Q,P_1,\ldots ,P_{n-1+n},E_2$, $R_0,R_1,\ldots ,R_m,Q_1,\ldots ,Q_m$ be a basis
for $Pic(Z)$. Then $k_Z^*:Pic(Z)\rightarrow Pic(Z)$ acts as
\begin{eqnarray*}
k_Z^*(H_Z)&=&(2n+1)H_Z-nE_1-(n+1)Q-(n+1)\sum _{j=1}^{n-1}jP_j-\sum _{j=1}^{n-1}(n^2-1+j)P_{n-1+j}\\
&&-(n^2+n-2)P_{n-1+n}-nE_2-(n+1)\sum _{j=1}^{m}Q_j-(n+1)R_m,\\
k_Z^*(E_1)&=&E_1,~k_Z^*(Q)=H_Z-E_1-Q-\sum _{j=1}^{n-1}jP_j-(n-1)\sum _{j=1}^{n}P_{n-1+j}-\sum _{j=1}^mQ_j-\sum _{j=0}^mR_j,\\
k_Z^*(P_{n-1-j})&=&P_{n-1+j},~j=0,\ldots ,n-2,~k_Z^*(P_{n-1+j})=P_{n-1-j},~j=0,\ldots ,n-2,\\
k_Z^*(P_{n-1+n-1})&=&H_Z-E_1-Q-\sum _{j=1}^{n-1}(j+1)P_j-n\sum _{j=1}^nP_{n-1+j}-E_2-\sum _{j=1}^mQ_j,\\
k_Z^*(P_{n-1+n})&=&E_2+C_2=H_Z-R_m,~k_Z^*(E_2)=P_{n-1+n},\\
k_Z^*(R_0)&=&C_4=2H_Z-E_1-2Q-\sum _{j=1}^{n-1}jP_j-(n-1)\sum _{j=1}^nP_{n-1+j}\\
&&-E_2-2\sum _{j=1}^mQ_j-R_m,\\
k_Z^*(R_j)&=&Q_j,~j=1,2,\ldots ,m,~k_Z^*(Q_j)=R_{j-1},~j=1,2,\ldots ,m.
\end{eqnarray*}
The spectral radius of $k_Z^*$ can be computed as the greatest real zero of the characteristic polynomial of the matrix representation $M_1$ of $k_Z^*$ restricted
to $\{H_Z,Q, P_{n-1+n-1},P_{n-1+n}, E_2$ ,$R_0,\ldots ,R_m$, $Q_1,\ldots
,Q_m\}$, which is
\begin{eqnarray*}
P(x)=-(x-1)x(x+1)[x^{2m}(x^3-nx^2-2x-1)+x^2+x+n].
\end{eqnarray*}
From this the conclusions of Lemma \ref{CharacteristicPolynomialNOdd4} follow.
\end{proof}

5. Case $n=deg (F)$ is odd; $a_0\not= 2/(m+1)$ for any $m=0,1,2,\ldots $; $a_0= \frac{n+1}{2}+\frac{l}{2(l+1)}$ for some $l=0,1,2,\ldots $;
$L_i(a_0,a_1,\ldots ,a_n)=0$ for any $0\leq i\leq n-2$.
\begin{lemma}
The spectral radius of $k_Z^*:Pic(Z)\rightarrow Pic(Z)$ is the largest real root of the polynomial $x^{2l+2}(x^3-nx^2-2x-1)+nx^2+x+1.$
\label{CharacteristicPolynomialNOdd5}\end{lemma}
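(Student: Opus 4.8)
The plan is to follow verbatim the template already used in Lemmas \ref{CharacteristicPolynomialNOdd3} and \ref{CharacteristicPolynomialNOdd4}, now working on the space $Z$ produced by Subcase 2.2 of Theorem \ref{OddCase} (equivalently Subcase 2.2 of Lemma \ref{HigherBlowup}). Because here $a_0\not=2/(m+1)$, the space $Z$ carries \emph{no} $Q_j,R_j$ fibers coming from the even-case blowups of Theorem \ref{DegreeEvenCase}; the only extra divisors beyond those of Lemma \ref{CharacteristicPolynomialNOdd3} are the $2l+2$ exceptional fibers obtained by blowing up the orbit $ep_n,\,k_{Y_n}(ep_n),\ldots,k_{Y_n}^{2l+1}(ep_n)=0\in E_2$ of the curve $C_2$. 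First I would fix the basis $H_Z,E_1,Q,P_1,\ldots,P_{n-1+n},E_2,S_0,\ldots,S_{2l+1}$ of $Pic(Z)$, where $S_0,\ldots,S_{2l+1}$ denote these new fibers listed in the order the orbit visits them (alternating between $P_{n-1+n}$ and $E_2$). Then I would write $C_1,C_2,C_3,C_4$ in this basis; the only change from Lemma \ref{CharacteristicPolynomialNOdd3} is the additional $-\sum S_j$-type corrections forced by the blown-up orbit points lying on these curves.

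Next I would compute $k_Z^*$ on each basis vector. On $H_Z,E_1,Q$ and the $P$'s the action is the one already recorded in Lemma \ref{CharacteristicPolynomialNOdd3}, adjusted by the new $S_j$ terms; the decisive new datum is that the chain $S_0,\ldots,S_{2l+1}$ is permuted in reverse by the pullback, i.e. $k_Z^*(S_j)=S_{j-1}$ for $1\le j\le 2l+1$, while $k_Z^*(S_0)$ and the images of $E_2$ and $P_{n-1+n}$ close up the cycle through $C_2$ and $C_4$. This is entirely analogous to the role played by $R_0,Q_1,R_1,\ldots,Q_m,R_m$ in the even case, where $k_Z^*(R_0)=C_4$, $k_Z^*(R_j)=Q_j$ and $k_Z^*(Q_j)=R_{j-1}$.

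With $k_Z^*$ in hand I would restrict to the $k_Z^*$-invariant subspace spanned by $\{H_Z,Q,P_{n-1+n-1},P_{n-1+n},E_2,S_0,\ldots,S_{2l+1}\}$, form the matrix $M_1$ of this restriction, and compute $P(x)=\det(M_1-xI)$. As in the earlier cases the full characteristic polynomial should factor as a product of spurious factors (powers of $x\pm1$, and possibly $x$) times the genuine factor $x^{2l+2}(x^3-nx^2-2x-1)+nx^2+x+1$: the length-$(2l+2)$ chain of $S_j$'s is exactly what produces the prefactor $x^{2l+2}$ multiplying the block $x^3-nx^2-2x-1$ already seen in Lemma \ref{CharacteristicPolynomialNOdd3}, while the low-order tail $nx^2+x+1$ records how the chain re-enters the picture through $C_2$ and $C_4$ (note that this tail is the coefficient-reversal of the tail $x^2+x+n$ appearing in Lemma \ref{CharacteristicPolynomialNOdd4}, reflecting the duality between the two kinds of blowups). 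The spectral radius is then the largest real root of this genuine factor, which is the claim.

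The main obstacle I expect is bookkeeping rather than conceptual: getting the coefficients of the $S_j$ in the expressions for $C_1,\ldots,C_4$ and in $k_Z^*(H_Z),k_Z^*(Q),k_Z^*(P_{n-1+n-1})$ exactly right, so that the cofactor expansion of $M_1$ along the $S$-chain telescopes to the clean polynomial $x^{2l+2}(x^3-nx^2-2x-1)+nx^2+x+1$ instead of leaving spurious extra terms. Once the chain is laid out in the correct order and the two endpoints are matched to $C_2$ and $C_4$, the determinant collapses exactly as it does in Lemma \ref{CharacteristicPolynomialNOdd4}.
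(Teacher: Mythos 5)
Your proposal is correct and follows essentially the same route as the paper: the paper likewise fixes the basis $H_Z,E_1,Q,P_1,\ldots,P_{n-1+n},P_{n-1+n+1},E_2,S_0,\ldots,S_l,T_1,\ldots,T_l$ (your $S_0,\ldots,S_{2l+1}$ are just its $P_{n-1+n+1},S_0,T_1,S_1,\ldots,T_l,S_l$ relabeled in orbit order), records the same reverse-permutation action of $k_Z^*$ on the chain with the loop closed through $C_2$ (and $C_4$ entering via its passage through the last orbit point $0\in E_2$), and extracts the polynomial $x^{2l+2}(x^3-nx^2-2x-1)+nx^2+x+1$ as the characteristic polynomial of the restricted block; your only deviation is keeping $P_{n-1+n}$ and $E_2$ in the restricted set, which merely adds the spurious factor $x^2-1$ you already anticipated.
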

\begin{proof}
Let $Z$ be the space constructed in Subcase 2.2 of Lemma \ref{HigherBlowup}. Denote
\begin{eqnarray*}
s_0&=&k_{Y_n}(ep_n)\in E_2,\\
t_1&=&k_{Y_n}^2(ep_n)\in P_{n-1+n},\\
\ldots&&\\
t_l&=&k_{Y_n}^{2l}(ep_n)\in P_{n-1+n},~s_l=k_{Y_n}^{2l+1}(ep_n)=0\in E_2.
\end{eqnarray*}
Let $P_{n-1+n+1}$ be the exceptional fiber of blowup at $ep_n$, $S_j$ the exceptional fiber of blowup at $s_j$, and $T_j$ the exceptional fiber of blowup
at $t_j$. Let $H_Z,E_1,Q,P_1,\ldots ,P_{n-1+n},P_{n-1+n+1},E_2$, $S_0,S_1,\ldots ,S_m,T_1,\ldots ,T_m$ be a basis for $Pic(Z)$. In this basis then
\begin{eqnarray*}
C_1&=&H_Z-E_1-Q-\sum _{j=1}^{n-1}(j+1)P_j-n\sum _{j=1}^{n+1}P_{n-1+j}-E_2-n\sum _{j=1}^lT_j-\sum _{j=0}^{l}S_j,\\
C_2&=&H_Z-E_1-Q-E_2-\sum _{j=0}^mS_j,\\
C_3&=&H_Z-E_1-Q-\sum _{j=1}^{n-1}jP_j-(n-1)\sum _{j=1}^{n+1}P_{n-1+j}-(n-1)\sum _{j=1}^lT_j,\\
C_4&=&2H_Z-E_1-2Q-\sum _{j=1}^{n-1}jP_j-(n-1)\sum _{j=1}^{n+1}P_{n-1+j}-E_2-(n-1)\sum _{j=1}^lT_j-\sum _{j=0}^{l-1}S_j-2S_m.
\end{eqnarray*}
To justify these formulas note that in the local coordinate for $E_2$ chosen in Lemma \ref{HigherBlowup}
\begin{eqnarray*}
C_1\cap E_2=[\infty ]_{E_2},~C_2\cap E_2=[1]_{E_2},~C_3\cap E_2=\emptyset ,~C_4\cap E_2=[0]_{E_2}.
\end{eqnarray*}
Then from the condition imposed on $a_0$, it follows that $s_0,\ldots ,s_m\notin C_1\cup C_2\cup C_3$, while $C_4$ goes through $s_m=[0]_{E_2}$ with
multiplicity $1$, and $s_0,\ldots ,s_{m-1}\notin C_4$. Then $k_Z^*:Pic(Z)\rightarrow Pic(Z)$
acts as
\begin{eqnarray*}
k_Z^*(H_Z)&=&(2n+1)H_Z-nE_1-(n+1)Q-(n+1)\sum _{j=1}^{n-1}jP_j\\
&&-\sum _{j=1}^{n-1}(n^2-1+j)P_{n-1+j}-(n^2+n-2)P_{n-1+n}-nE_2\\
&&-(n^2+n-2)\sum _{j=1}^lT_j-n\sum _{j=0}^{l-1}S_j-2nS_l ,\\
k_Z^*(E_1)&=&E_1,~k_Z^*(Q)=H_Z-E_1-Q-\sum _{j=1}^{n-1}jP_j-(n-1)\sum _{j=1}^{n}P_{n-1+j}-(n-1)\sum _{j=1}^lT_j,\\
k_Z^*(P_{n-1-j})&=&P_{n-1+j},~j=0,\ldots ,n-2,~k_Z^*(P_{n-1+j})=P_{n-1-j},~j=0,\ldots ,n-2,\\
k_Z^*(P_{n-1+n-1})&=&H_Z-E_1-Q-\sum _{j=1}^{n-1}(j+1)P_j-n\sum _{j=1}^nP_{n-1+j}-E_2-\sum _{j=0}^lS_j,\\
k_Z^*(P_{n-1+n})&=&E_2,~k_Z^*(E_2)=P_{n-1+n},\\
k_Z^*(P_{n-1+n+1})&=&H_Z-E_2-\sum _{j=0}^lS_j,\\
k_Z^*(S_0)&=&P_{n-1+n+1},~k_Z^*(S_j)=T_j,~j=1,\ldots ,l,~k_Z^*(T_j)=S_{j-1},~j=1,\ldots ,l.
\end{eqnarray*}
The spectral radius of $k_Z^*$ can be computed as the greatest real zero of the characteristic polynomial of the matrix representation $M_1$ of $k_Z^*$ restricted
to $\{H_Z,Q,P_{n-1+n-1},P_{n-1+n+1}$, $S_0,\ldots ,S_l$, $T_1,\ldots
,T_l\}$. The characteristic polynomial $P(x)=det(M_1-xI)$ of $M_1$ is
\begin{eqnarray*}
P(x)=-[x^{2l+2}(x^3-nx^2-2x-1)+nx^2+x+1].
\end{eqnarray*}
From this the conclusions of Lemma \ref{CharacteristicPolynomialNOdd5} follow.
\end{proof}

\end{document}